 \newtheorem{theo}{Theorem}[section]
 \newtheorem{coro}[theo]{Corollary}
 \newtheorem{lemma}[theo]{Lemma}
 \newtheorem{prop}[theo]{Proposition}
\theoremstyle{definition}
\newtheorem*{case1}{Case 1}
\newtheorem*{case2}{Case 2}
\newtheorem*{case3}{Case 3}
\theoremstyle{remark}
\newtheorem{remark}[theo]{Remark}
\newcommand{\qf}[1]{\mbox{$\langle #1\rangle $}}
\newcommand{\pff}[1]{\mbox{$\langle\!\langle #1\rangle\!\rangle $}}
\newcommand{\qpf}[1]{\mbox{$\langle\!\langle #1]]$}}
\newcommand{\HH}{{\mathbb H}}
\newcommand{\NN}{{\mathbb N}}
\newcommand{\MM}{{\mathbb M}}
\renewcommand{\phi}{\varphi}
\DeclareMathOperator{\chr}{char}
\DeclareMathOperator{\ani}{an}
\DeclareMathOperator{\spa}{span}
\DeclareMathOperator{\Br}{Br}
\DeclareMathOperator{\ind}{ind}
\DeclareMathOperator{\rad}{Rad}
\begin{document}

\title[Witt kernels for quartic extensions]{Witt kernels and 
Brauer kernels for quartic extensions in characteristic two}

\author{Detlev W.~Hoffmann}
\email{detlev.hoffmann@math.tu-dortmund.de}

\author{Marco Sobiech}
\email{marco.sobiech@uni-dortmund.de}

\address{Fakult\"at f\"ur Mathematik,
Technische Universit\"at Dortmund,
D-44221 Dortmund,
Germany}

\thanks{The first author carried out part of this research during 
stays at the Fields Institute at Toronto in March and May 2013 in the frame
of the Thematic Program on {\it Torsors, Nonassociative Algebras and 
Cohomological Invariants}, and at the University of Ottawa and the University 
of Western Ontario in March 2013.  He is grateful to all these institutes for
their generous hospitality.\\
The first author's research on this paper is also supported in part by 
DFG project HO 4784/1-1 {\it Annihilators
and kernels in Kato's cohomology in positive characteristic and in
Witt groups in characteristic $2$}}

\begin{abstract}
Let $F$ be a field of characteristic $2$ and let $E/F$ be a field extension
of degree $4$.  We determine the kernel $W_q(E/F)$ of the
restriction map  $W_qF\to W_qE$  between the
Witt groups of nondegenerate quadratic forms over $F$ and over $E$,
completing earlier partial results by Ahmad, Baeza, Mammone and Moresi. 
We also deduct the corresponding result for the Witt kernel 
$W(E/F)$ of the 
restriction map  $WF\to WE$  between the
Witt rings of nondegenerate symmetric bilinear forms over $F$ and over $E$
from earlier results by the first author.  As application, we
describe the $2$-torsion part of the Brauer kernel for such extensions.  
\end{abstract}

\keywords{quartic extension, quadratic form, bilinear form, 
Witt group, Witt ring, Witt kernel, Brauer group, 
quaternion algebra, biquaternion algebra}

\subjclass[2010]{Primary 11E04; Secondary 11E81 12F05 16K50}

\maketitle

\section{Introduction}
When considering algebraic objects defined over fields such as quadratic
forms or central simple algebras, an important problem is to characterize 
their behavior under field extensions, for example to determine which
quadratic forms become hyperbolic or which central simple algebras split
over a given extension, in other words to compute the Witt kernel or
the Brauer kernel for that extension.    The purpose of the present paper
is to determine Witt kernels for quartic extensions in characteristic $2$
completing earlier results by various authors, and to apply this 
to the determination of the
$2$-torsion part of the Brauer kernel.  These kernels have been previously 
computed in characteristic not $2$.  Also, in all characteristics the 
kernels have been known for quite some time in the case of quadratic 
extensions, and they
are trivial for odd degree extensions due to Springer's theorem, so
quartic extensions in characteristic $2$ are the first case where the 
determination of these kernels was still incomplete.  We will survey
the known results in \S 2 and \S 4, including the bilinear case.

There are several aspects that complicate matters when studying
the Witt kernels for quartic extensions in characteristic $2$. 
Firstly, one has to distinguish between the Witt kernel for bilinear
forms and that for quadratic forms.  Secondly, one has to handle
carefully various cases of separability and inseparability when 
dealing with quartic extensions. For that reason, we provide a quick
survey of quartic extensions in characteristic $2$ in \S 3. 
Finally, when computing the
Witt kernels for quadratic forms in characteristic $2$, we will 
often have to deal with singular
quadratic forms, something that can be largely ignored 
in characteristic not $2$.  Accordingly, the formulation of the main
result in Theorem \ref{main} concerning quadratic Witt kernels
of simple quartic extensions is more complex than the corresponding 
result in characteristic not $2$.  Section 5  will be devoted to the
proof of that theorem and we will also show how previously known
results on generators for the quadratic Witt kernels for non 
purely inseparable biquadratic and purely inseparable
simple quartic extensions relate to our list of generators for these
kernels.  In \S 6 we apply our knowledge of Witt kernels to determine
the $2$-torsion part of the Brauer kernel for quartic extensions in
characteristic $2$.

\section{Basic definitions and facts}
We refer to \cite{ekm} and \cite{hl1} for any undefined terminology 
or any basic facts
about quadratic and bilinear forms especially in the case of
characteristic $2$ that we do not mention explicitly.
All quadratic resp. bilinear forms over a field $F$ are assumed to
be finite-dimensional, and bilinear forms are always assumed to be 
symmetric.  
Let $b=(b,V)$ be a bilinear form defined on an
$F$-vector space $V$.   $b$ is said to be nonsingular if 
for its radical one has $\rad(b)=\{ x\in V\,|\,b(x,V)\}=0$.
In the sequel, we will always assume bilinear forms to be nonsingular.
We have the usual notions of isometry $\cong$, orthogonal sum $\perp$
and tensor product $\otimes$ for bilinear forms.  
We define the value sets
$D_F(b)=\{ b(x,x)\,|\,x\in V\setminus\{ 0\}\}$, 
$D_F^*(b)=D_F(b)\cap F^*$, $D_F^0(b)=D_F(b)\cup \{ 0\}$.
$b$ is said to be isotropic if $D_F(b)=D_F^0(b)$, anisotropic otherwise, 
i.e. if $D_F(b)=D_F^*(b)$.
The $2$-dimensional isotropic bilinear form is called a 
metabolic plane in which case there is a basis such that the Gram
matrix is of shape
$\MM_a\cong\begin{pmatrix} 0 & 1 \\ 1 & a \end{pmatrix}$, $a\in F$.
$\MM_0$ is called a hyperbolic plane, and 
a metabolic resp. hyperbolic bilinear form is one that is 
isometric to an orthogonal sum 
of metabolic resp. hyperbolic planes.  
Any bilinear $b$ with $D_F^*(b)\neq \emptyset$ can be diagonalized.
This always holds in characteristic not $2$, and also in characteristic
$2$ provided $b$ is not hyperbolic.
We write $b\cong\qf{a_1,\ldots,a_n}_b$ for such a diagonalization.
Each bilinear form $b$ decomposes as $b\cong b_{\ani}\perp b_m$
with $b_{\ani}$ anisotropic and $b_m$ metabolic.  $b_{\ani}$ is
uniquely determined up to isometry.  Two nonsingular bilinear forms $b$, $b'$
are Witt equivalent, $b\sim b'$,  if $b_{\ani}\cong b'_{\ani}$.  The
Witt equivalence classes together with addition resp. multiplication
induced by orthogonal sum resp.
tensor product form the Witt ring $WF$ of bilinear forms,
or the bilinear Witt ring for short.

In characteristic not $2$ the theories of bilinear and quadratic forms
are the same, so let us from now on assume   $\chr(F)=2$.

Let $q$ be a quadratic form defined on the $F$-vector space $V$.
The associated bilinear form of $q$
is defined to be $b_q(x,y)=q(x+y)-q(x)-q(y)$.   Isometry, isotropy,
$D_F(q)$, $D_F^*(q)$ and $D_F^0(q)$ are defined in an analogous way to
the bilinear case.  If $(q,V)$, $(q',V')$ are quadratic forms, we say
that $q'$ dominates $q$, denoted by $q\prec q'$, if there exists
an injective linear map $t:V\to V'$ with $q(x)=q'(tx)$ for all $x\in V$.
  
$q$ is said to be nonsingular if the radical $\rad(q):=\rad(b_q)=\{ 0\}$,
totally singular if $\rad(q)=V$.  
$q$ is totally singular iff $q$
is a diagonal form $\sum_{i=1}^n a_ix_i^2$, and we write
$q\cong \qf{a_1,\ldots,a_n}$.  Note that in this case
$D_F^0(q)$ is a finite-dimensional $F^2$-vector space inside $F$.
Two totally singular quadratic forms $q$, $q'$ are isometric iff
$\dim q=\dim q'$ and $D_F^0(q)=D_F^0(q')$.
$q$ is nonsingular iff it is isometric
to an orthogonal sum of forms of type $[a,b]:=ax^2+xy+by^2$.
A nonsingular $2$-dimensional isotropic quadratic form is called
a hyperbolic plane $\HH \cong [0,0]$, and a hyperbolic quadratic form
is an orthogonal sum of hyperbolic planes.
Any quadratic form $q$ can be decomposed
in the following way (see, e.g., \cite[Prop.  2.4]{hl1}):
$$q\cong i\times\HH\perp\widetilde{q}_r\perp\widetilde{q}_s\perp 
j\times\qf{0}$$
with $\widetilde{q}_r$ nonsingular, $\widetilde{q}_s$ totally
singular, $\widetilde{q}_r\perp\widetilde{q}_s$ anisotropic.
The form $\widetilde{q}_r\perp\widetilde{q}_s$ is uniquely
determined up to isometry and is called the anisotropic part
of $q$, denoted by $q_{\ani}$. $i=i_W(q)$ is called the Witt index
of $q$, $j=i_d(q)$ the defect, and  $i_t(q)=i_W(q)+i_d(q)$ the total
index, which is the same as the dimension of a maximal totally
isotropic subspace of $q$.  
Note that $\widetilde{q}_s\perp j\times\qf{0}$ is just the
restriction $q|_{\rad(q)}$ of $q$ to its radical and it is 
therefore also uniquely determined, as is the nondefective part
$i\times\HH\perp\widetilde{q}_r\perp\widetilde{q}_s$ of $q$.

Two quadratic forms $q$, $q'$ are
called Witt equivalent, $q\sim q'$, if $q_{\ani}\cong q'_{\ani}$.
The Witt equivalence classes of nonsingular quadratic forms over
$F$ together with addition induced by the orthogonal sum 
form the quadratic Witt group 
$W_qF$ which becomes a $WF$-module in a natural way.  
Note that in particular $\qf{a}_b\otimes q\cong aq$.

Pfister forms will play an important role in our investigations.
An $n$-fold bilinear Pfister form (in arbitrary characteristic) 
is a nonsingular bilinear form isometric to a form of type
$\pff{a_1,\ldots,a_n}_b:=\qf{1,-a_1}_b\otimes\ldots\otimes\qf{1,-a_n}_b$,
$a_i\in F^*$.
An $(n+1)$-fold quadratic Pfister form in  characteristic $2$ is 
a nonsingular quadratic form isometric to a form of type
$\qpf{a_1,\ldots,a_n,c}:=\pff{a_1,\ldots,a_n}_b\otimes [1,c]$,
$a_i\in F^*$, $c\in F$.  Pfister forms are either anisotropic or
metabolic (bilinear case) resp. hyperbolic (quadratic case).  
They are also round forms, i.e. if $\pi$ is a bilinear or quadratic
Pfister form and $x\in F^*$, then $\pi\cong x\pi$ iff $x\in D_F^*(\pi)$.
An $n$-fold quasi-Pfister form $\widetilde{\pi}$
is a totally singular quadratic form such that there exists
an $n$-fold bilinear Pfister form $\pi\cong \pff{a_1,\ldots,a_n}_b$ 
with $\widetilde{\pi}(x)=\pi (x,x)$, in which case we
write $\widetilde{\pi}\cong \pff{a_1,\ldots,a_n}$.  We then have
$D_F^0(\widetilde{\pi})= D_F^0(\pi)=F^2(a_1,\ldots,a_n)$.
Quasi-Pfister forms are clearly also round.
A quadratic form $q$ is a Pfister neighbor of the quadratic or quasi-Pfister
form $\pi$ iff $\dim q>\frac{1}{2}\dim\pi$ and there exists 
an $x\in F^*$ with  $q\prec x\pi$.   In this case, $q$ is isotropic
iff $\pi$ is isotropic.

We put
$\wp(F)=\{ \lambda^2+\lambda\,|\,\lambda\in F\}$,
and for $b\in F$ we write $\wp^{-1}(b)$ to denote a root of
$X^2+X+b$ in some algebraic closure of $F$.   The Arf invariant
of a nonsingular quadratic form $q\cong[c_1,d_1]\perp\ldots\perp [c_n,d_n]$
is defined to be $\Delta(q)=\sum_{i=1}^nc_id_i\in F/\wp(F)$.  A
$2$-dimensional nonsingular form $q=[a,b]$ is isotropic
iff $\Delta(\phi)=ab=0\in F/\wp(F)$.   

By abuse of notation, we often use the
same symbol to denote a quadratic or bilinear form and its Witt class. 
 
If $E/F$ is a field extension and $\phi$ is a bilinear resp. 
quadratic form over $F$, we denote by $\phi_E=\phi\otimes E$ the form
obtained by scalar extension to $E$.  This gives rise to the
restriction homomorphisms $WF\to WE$ resp.   $W_qF\to W_qE$
with kernels  $W(E/F)$ resp.$W_q(E/F)$.  We call these
kernels the bilinear resp. quadratic Witt kernel of the extension
$E/F$.

We will often freely use the following results.

\begin{lemma}\label{subform}
Let $\phi$ and $\psi$ be quadratic forms over $F$ with $\phi$ nonsingular,
and let $\sigma\cong\qf{c_1,\ldots,c_n}$ be a totally singular quadratic form
over $F$.
\begin{enumerate}
\item[{\rm (i)}] For all $d_1,\ldots, d_n\in F$ one has
$[c_1,d_1]\perp\ldots\perp [c_n,d_n]\perp\sigma\sim\sigma$.
\item[{\rm (ii)}] If $\phi\prec\psi$ then there exists a quadratic
form $\tau$ over $F$ with $\psi\cong\phi\perp\tau$.
\item[{\rm (iii)}] If $\psi$ is nonsingular, then the following
are equivalent: 
\begin{enumerate}
\item[{\rm (a)}] $\phi\perp\sigma\prec\psi$;
\item[{\rm (b)}] there exists a nonsingular quadratic form $\tau$ over $F$ of
dimension $\dim\psi-\dim\phi - 2\dim\sigma$ and
$d_1,\ldots, d_n\in F$ with 
$$\psi\cong\phi\perp [c_1,d_1]\perp\ldots\perp [c_n,d_n]\perp\tau\ ;$$
\item[{\rm (c)}] 
there exists a nonsingular quadratic form $\tau$ over $F$ of
dimension $\dim\psi-\dim\phi - 2n$ such that
$\psi\perp\phi\perp\sigma \sim \tau\perp\sigma$.
\end{enumerate}
\end{enumerate}
\end{lemma}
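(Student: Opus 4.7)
\emph{Sketch.} For part (i), my plan is to reduce by induction to the single-block case $[c,d]\perp \langle c\rangle \sim \langle c\rangle$, and for this I would exhibit an explicit isometry $[c,d]\perp\langle c\rangle \cong \HH\perp\langle c\rangle$. In the natural basis $e_1,e_2,e_3$, the vector $e_1+e_3$ is isotropic but not in $\rad(b_q)$ (since $b_q(e_1+e_3,e_2)=1$), and the quadratically isotropic partner $v = e_2 + d(e_1+e_3)$ gives a hyperbolic plane whose $b_q$-orthogonal complement is spanned by $e_3$ and carries $\langle c\rangle$. Part (ii) is standard: nonsingularity of $\phi$ makes $b_\psi$ restrict nondegenerately to the image of the domination map, so this image and its $b_\psi$-orthogonal complement meet trivially, producing the desired $\psi\cong \phi\perp \tau$.

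For part (iii), I would prove the chain (a) $\Rightarrow$ (b) $\Rightarrow$ (c) $\Rightarrow$ (a). The implication (b) $\Rightarrow$ (a) is immediate by embedding $\sigma$ via the $x$-coordinates of the $[c_i,d_i]$ blocks. To prove (a) $\Rightarrow$ (b), I would first apply (ii) to split off $\phi$ as $\psi \cong \phi\perp\psi'$, noting that the image of $\sigma$ under the domination map lies in $\psi'$ since it is $b_\psi$-orthogonal to the image of $\phi$. I would then induct on $n$: let $s_1,\ldots,s_n\in \psi'$ be the images of the standard basis of $\sigma$, pairwise $b_{\psi'}$-orthogonal with $\psi'(s_i)=c_i$. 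The key step is to find $t_1 \in \psi'$ with $b_{\psi'}(s_1,t_1)=1$ that is simultaneously $b_{\psi'}$-orthogonal to $s_2,\ldots,s_n$; such a $t_1$ exists because otherwise the linear functional $b_{\psi'}(s_1,\cdot)$ would vanish on $\spa(s_2,\ldots,s_n)^\perp$, forcing $s_1 \in \spa(s_2,\ldots,s_n)^{\perp\perp}=\spa(s_2,\ldots,s_n)$ by nondegeneracy of $b_{\psi'}$, a contradiction. Then $\spa(s_1,t_1)$ carries a nonsingular block $[c_1,d_1]$ with $d_1=\psi'(t_1)$, which splits off by (ii), and the remaining summand still contains $\langle c_2,\ldots,c_n\rangle$, feeding the induction.

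The implication (b) $\Rightarrow$ (c) will be routine: adding $\phi\perp\sigma$ to both sides of (b) and using that $\phi\perp\phi$ is hyperbolic in characteristic $2$ (since $\phi\perp\phi\cong\langle 1,1\rangle_b\otimes\phi$ with $\langle 1,1\rangle_b$ metabolic) together with part (i) collapses the extra data to $\tau\perp\sigma$. The remaining implication (c) $\Rightarrow$ (a) is the main obstacle, because one cannot in general cancel a totally singular form $\sigma$ from a Witt equivalence (for instance $[c,d]\perp\langle c\rangle \sim \langle c\rangle$ does not force $[c,d]\sim 0$). My plan is to exploit the dimension bookkeeping: the defects on both sides of (c) agree (both come from $\sigma$), forcing the Witt index of $\psi\perp\phi\perp\sigma$ to exceed that of $\tau\perp\sigma$ by exactly $\dim\phi+n$, which matches precisely the hyperbolic contribution that $\phi\perp\phi$ together with the $n$ splittings from (i) would produce if (b) held. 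Using the uniqueness of the anisotropic decomposition to unwind these hyperbolic planes, I would recover $\phi\prec\psi$ and then $\sigma\prec\psi'$ (the complement supplied by (ii)), completing the circle.
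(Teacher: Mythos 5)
Parts (i) and (ii) are fine, and your explicit isometry $[c,d]\perp\qf{c}\cong\HH\perp\qf{c}$ is exactly the one the paper uses. Your proof of (a)$\Rightarrow$(b) is also correct and is actually more self-contained than the paper, which simply cites Lemma~3.1 of Hoffmann--Laghribi at this point: the inductive construction of $t_1\in\spa(s_2,\ldots,s_n)^{\perp}$ with $b_{\psi'}(s_1,t_1)\neq 0$, via $\spa(s_2,\ldots,s_n)^{\perp\perp}=\spa(s_2,\ldots,s_n)$ and the linear independence of the $s_i$, is sound, and splitting off the resulting nonsingular plane $[c_1,d_1]$ keeps $s_2,\ldots,s_n$ (which lie in $\spa(s_1,t_1)^{\perp}$, meeting $\spa(s_1,t_1)$ trivially) available for the induction. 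The implications (b)$\Rightarrow$(a) and (b)$\Rightarrow$(c) are routine, as you say.

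The gap is in (c)$\Rightarrow$(a). Your bookkeeping is correct as far as it goes: adding $\phi$ to both sides and using that both sides restrict to $\sigma$ on their radicals gives $i_W(\psi\perp\sigma)=i_W(\tau\perp\phi\perp\sigma)+n$, equivalently $\psi\perp\sigma\cong n\HH\perp\phi\perp\tau\perp\sigma$. But from here, ``unwinding the hyperbolic planes by uniqueness of the anisotropic decomposition'' does not produce $\phi\perp\sigma\prec\psi$. The Witt index count only tells you the total number of hyperbolic planes; it does not tell you that the $n$ extra planes on the left arise from $n$ nonsingular completions $[c_i,d_i]\prec\psi$ of the $\qf{c_i}$. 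They could a priori be created by the interaction of $\psi$ with $\sigma$ through the very isometry $[c,d]\perp\qf{c}\cong\HH\perp\qf{c}$ from your part (i) --- which is exactly the mechanism behind the failure of cancellation of totally singular forms that you yourself point out. Concretely, already for $n=1$ one can get as far as $\psi\cong[c,d]\perp\psi''$ with $\psi''\perp\qf{c}\cong\phi\perp\tau\perp\qf{c}$, and at that point one is stuck with the same non-cancellable $\qf{c}$ one started with. The paper closes this gap by invoking Lemma~3.9 of Hoffmann--Laghribi: since $\sigma\prec n\HH\cong[c_1,0]\perp\ldots\perp[c_n,0]$, one may replace the copy of $\sigma$ on each side of the isometry $\psi\perp\sigma\cong n\HH\perp\phi\perp\tau\perp\sigma$ by a nonsingular form of dimension $2n$ dominating it, namely $n\HH$ on the left and some $\rho\cong[c_1,d_1]\perp\ldots\perp[c_n,d_n]$ on the right, yielding $\psi\perp n\HH\cong n\HH\perp\phi\perp\tau\perp\rho$ and hence (b) after Witt cancellation of the nonsingular $n\HH$. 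Some such ``nonsingular completion'' lemma (or an equivalent direct argument) is genuinely needed here; your sketch does not supply it.
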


\begin{proof} 
(i) follows from $[c,d]\perp\qf{c}\cong\HH\perp\qf{c}$, and
(ii) follows from general properties of nonsingular
subspaces (see, e.g., \cite[Prop.~7.22]{ekm}).  

(iii)  The
equivalence of (a) and (b)  is a special case
of a more general result \cite[Lemma 3.1]{hl1}.  

(b) implies (c)
by adding $\phi$ on both sides noting that 
$\phi\perp\phi=(\dim\phi)\HH\sim 0$.

To show that (c) implies (b), note that by
adding $\phi$ on both sides and comparing dimensions, we get
$$\psi\perp\sigma\cong n
\HH\perp\phi\perp\tau\perp\sigma\ .$$
Also, $\sigma\prec n\HH\cong [c_1,0]\perp\ldots\perp [c_n,0]$.  
By \cite[Lemma 3.9]{hl1}, there exists a nonsingular form
$\rho\cong [c_1,d_1]\perp\ldots\perp [c_n,d_n]$ for suitable
$d_i\in F$ such that
$$\psi\perp n\HH\cong n\HH\perp\phi\perp\tau\perp\rho$$
and thus $\psi\cong \phi\perp\tau\perp\rho$ as desired.
\end{proof}

An essential ingredient in our studies is the
behavior of quadratic forms under quadratic extensions as described
in by the following well known results.  

\begin{lemma}\label{quad}
Let $\phi$ be an anisotropic quadratic form over $F$ and let $E/F$ be
a quadratic extension.
\begin{enumerate}
\item[{\rm (i)}] {\rm (Baeza \cite[4.3]{b1}, 
Hoffmann-Laghribi \cite[Lemma 5.4]{hl2}.)}
If $E=F(\sqrt{a})$, $a\in F\setminus F^2$, 
then $\phi_E$ is isotropic iff there
exists $\lambda\in F^*$ such that $\lambda\qf{1,a}\prec\phi$.
\item[{\rm (ii)}] {\rm (Baeza \cite[V.4.2]{b2}.)}
If $E=F(\wp^{-1}(a))$, $a\in F\setminus\wp(F)$, 
then $\phi_E$ is isotropic iff there
exists $\lambda\in F^*$ and a quadratic form $\psi$ over $F$ with
$\phi\cong \lambda[1,a]\perp\psi$.
\end{enumerate}
\end{lemma}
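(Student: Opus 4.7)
The plan is to handle (i) and (ii) in parallel by writing any vector $v \in V \otimes_F E$ as $v = v_1 + \alpha v_2$ with $v_1, v_2 \in V$, where $\alpha$ is the distinguished generator of $E/F$ (so $\alpha^2 = a$ in case (i) and $\alpha^2 = \alpha + a$ in case (ii)). The key identity underlying everything is
\[
\phi_E(v_1 + \alpha v_2) = \phi(v_1) + \alpha^2\,\phi(v_2) + \alpha\, b_\phi(v_1, v_2),
\]
which uses the scaling rule $\phi_E(cw) = c^2\phi(w)$ and bilinearity of $b_\phi$.

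For each ($\Leftarrow$) direction I would produce an explicit isotropic vector. In (i), if $\lambda\qf{1,a} \prec \phi$ is realized by a linear map $(x,y) \mapsto xw_1 + yw_2$, then $\phi_E(\sqrt{a}\,w_1 + w_2) = \lambda a + \lambda a = 0$. In (ii), evaluating $\lambda[1,a]$ at $(\alpha, 1)$ gives $\lambda(\alpha^2 + \alpha + a) = 0$.

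For the substantive ($\Rightarrow$) direction I begin with a nonzero isotropic $v = v_1 + \alpha v_2$. Using $\{1,\alpha\}$ as an $F$-basis of $E$ and the minimal polynomial of $\alpha$ to rewrite $\alpha^2$, comparing coefficients of $1$ and $\alpha$ in $\phi_E(v) = 0$ yields the system
\[
\phi(v_1) = a\phi(v_2),\qquad b_\phi(v_1,v_2) = 0
\]
in case (i), and
\[
\phi(v_1) = a\phi(v_2),\qquad b_\phi(v_1,v_2) = \phi(v_2)
\]
in case (ii). I then check that $v_1, v_2$ are $F$-linearly independent by ruling out $v_1 = 0$, $v_2 = 0$ and any relation $v_1 = c v_2$ ($c \in F$); this uses anisotropy of $\phi$ together with the assumptions $a \notin F^2$ resp.\ $a \notin \wp(F)$, which prevent $\alpha$ or $c+\alpha$ from collapsing into $F$. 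Setting $\lambda := \phi(v_2) \in F^*$, the Gram data of $\phi|_{\spn(v_2, v_1)}$ on the ordered basis $(v_2, v_1)$ read off as $\lambda\qf{1,a}$ in case (i) (yielding the dominance $\lambda\qf{1,a} \prec \phi$) and as the nonsingular form $\lambda[1,a]$ in case (ii). In case (ii), nonsingularity of $\lambda[1,a]$ combined with Lemma \ref{subform}(ii) gives the orthogonal splitting $\phi \cong \lambda[1,a] \perp \psi$.

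The main obstacle is the bookkeeping in characteristic $2$: one must verify that the coefficient extraction from $\phi_E(v) = 0$ is genuinely $F$-linear (using that $b_\phi$ is alternating, so $b_\phi(v,v) = 0$), and one must carefully rule out the degenerate case $v_1 \in Fv_2$, which is precisely where the hypotheses $a \notin F^2$ resp.\ $a \notin \wp(F)$ enter. Once independence is in hand, the identification of the restricted form and the splitting in (ii) are immediate.
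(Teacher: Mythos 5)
Your proof is correct. The paper does not actually prove Lemma \ref{quad} --- it records it as a known result and cites Baeza \cite{b1,b2} and Hoffmann--Laghribi \cite{hl2} --- so there is no in-text argument to compare against; your direct computation is a perfectly good self-contained substitute. Writing $v=v_1+\alpha v_2$, expanding $\phi_E(v)=\phi(v_1)+\alpha^2\phi(v_2)+\alpha\, b_\phi(v_1,v_2)$, reducing $\alpha^2$ via the minimal polynomial, and comparing coefficients along the basis $\{1,\alpha\}$ gives exactly the right systems, and your elimination of the degenerate cases is sound: $v_2=0$ or $v_1=0$ contradicts anisotropy (using $a\neq 0$), and $v_1=cv_2$ forces $a=c^2\in F^2$ in case (i), while in case (ii) it already forces $\phi(v_2)=b_\phi(cv_2,v_2)=0$ directly (one can also argue uniformly via $\phi_E((c+\alpha)v_2)=(c+\alpha)^2\phi(v_2)\neq 0$ since $\alpha\notin F$). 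The identification of $\phi|_{\spn(v_2,v_1)}$ with $\lambda\qf{1,a}$ resp.\ $\lambda[1,a]$ where $\lambda=\phi(v_2)$, and the appeal to Lemma \ref{subform}(ii) to split off the nonsingular binary form in case (ii), complete the argument correctly; note that in case (i) only domination (not an orthogonal splitting) is claimed, which is all your totally singular restriction yields, so the asymmetry in the two conclusions is handled properly.
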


As a consequence, one can readily show the following.
\begin{prop}\label{quadkern}
Let $\phi$ be an anisotropic nonsingular quadratic form over 
$F$ and let $E/F$ be a quadratic extension.
\begin{enumerate}
\item[{\rm (i)}] {\rm (Ahmad \cite[Cor.~2.8]{a1}, Baeza \cite[4.3]{b1}.)}
If $E=F(\sqrt{a})$, $a\in F\setminus F^2$, 
then $\phi_E$ is hyperbolic iff
there exists a nonsingular quadratic form $q$ over $F$ with
$\phi\cong\qf{1,a}_b\otimes q$.  In particular,
$W_q(E/F)=\qf{1,a}_b\otimes W_qF$.
\item[{\rm (ii)}] {\rm (Baeza \cite[V.4.11]{b2}.)}
If $E=F(\wp^{-1}(a))$, $a\in F\setminus\wp(F)$,
then $\phi_E$ is hyperbolic iff there
exists a bilinear form $b$ over $F$ such that $\phi\cong b\otimes [1,a]$.
In particular, $W_q(E/F)=WF\otimes [1,a]$.\
\end{enumerate}
\end{prop}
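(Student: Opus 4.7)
The plan is to prove both parts by induction on $\dim\phi$, using Lemma \ref{quad} to peel off one divisor at a time. The ``if'' directions are easy: over $E=F(\sqrt{a})$ one has $\qf{1,a}_b\cong\qf{1,1}_b$, which is metabolic (the vector $(1,1)$ is isotropic), so $\qf{1,a}_b\otimes q$ becomes hyperbolic for any nonsingular quadratic $q$; and over $E=F(\wp^{-1}(a))$ the form $[1,a]$ is isotropic at $(\wp^{-1}(a),1)$, hence hyperbolic, so $b\otimes[1,a]$ becomes hyperbolic for any bilinear $b$. The substance lies in the converses.

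For (i), assume $\phi$ is anisotropic with $\phi_E$ hyperbolic, so in particular isotropic. By Lemma \ref{quad}(i) there exists $\lambda\in F^*$ with $\lambda\qf{1,a}\prec\phi$, and Lemma \ref{subform}(iii)(b) then provides $d_1,d_2\in F$ and a nonsingular $\psi$ of dimension $\dim\phi-4$ with
$$\phi\cong[\lambda,d_1]\perp[\lambda a,d_2]\perp\psi\ .$$
Set $\rho:=\qf{1,a}_b\otimes[\lambda,d_1]\cong[\lambda,d_1]\perp[\lambda a,a^{-1}d_1]$, which by the ``if'' direction is hyperbolic over $E$. Applying the standard identities $[c,e]\perp[c,e]\sim 0$ and $[c,e_1]\perp[c,e_2]\sim[c,e_1+e_2]$ in $W_qF$ gives
$$\phi\perp\rho\sim[\lambda a,d_2+a^{-1}d_1]\perp\psi\ ,$$
whose anisotropic part has dimension strictly less than $\dim\phi$ and still vanishes over $E$. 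By induction, this anisotropic part is isometric to $\qf{1,a}_b\otimes q'$ for some nonsingular $q'$, and rearranging the resulting Witt-equivalence yields $\phi\sim\qf{1,a}_b\otimes(q'\perp[\lambda,d_1])$ in $W_qF$. For (ii) the scheme is cleaner: Lemma \ref{quad}(ii) supplies $\lambda\in F^*$ and a nonsingular $\psi$ of dimension $\dim\phi-2$ with $\phi\cong\lambda[1,a]\perp\psi$; since $\lambda[1,a]$ becomes hyperbolic over $E$, so does $\psi$, and the inductive hypothesis applied to $\psi_{\ani}$ gives $\psi_{\ani}\cong b'\otimes[1,a]$, so that $\phi\sim(\qf{\lambda}_b\perp b')\otimes[1,a]$ in $W_qF$.

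The main obstacle in both parts is upgrading this Witt-equivalence to the asserted isometry $\phi\cong\qf{1,a}_b\otimes q$, resp.\ $\phi\cong b\otimes[1,a]$, when $\phi$ is anisotropic. This reduces to the statement that the anisotropic part of a form divisible by a bilinear (resp.\ one-fold quadratic) Pfister form remains divisible by that Pfister form, which is a standard consequence of the roundness property recorded in \S 2 together with Witt cancellation for nonsingular quadratic forms. The ``in particular'' statements describing $W_q(E/F)$ as $\qf{1,a}_b\otimes W_qF$ and as $WF\otimes[1,a]$ then follow at the level of Witt classes, independently of this last refinement.
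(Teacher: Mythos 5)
The paper gives no proof of this proposition at all---it cites Ahmad and Baeza and remarks only that it follows ``as a consequence'' of Lemma \ref{quad}---and your induction via Lemma \ref{quad}, peeling off $\lambda\qf{1,a}$ resp.\ $\lambda[1,a]$ with the help of Lemma \ref{subform} and absorbing the excess into a generator that dies over $E$, is precisely that intended derivation and is correct. The only step you leave to an external fact is the upgrade from Witt equivalence to isometry in part (i), and your appeal to the standard result that the anisotropic part of $\qf{1,a}_b\otimes q$ is again of the form $\qf{1,a}_b\otimes q''$ is legitimate there; note that in part (ii) even this is unnecessary, since the complement $\psi$ of $\lambda[1,a]$ inside the anisotropic form $\phi$ is itself anisotropic, so the induction already returns an isometry rather than a Witt equivalence.
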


The following lemma is an essential ingredient in the determination
of the Witt kernel for quartic extensions.
\begin{lemma}\label{pfistermult}
Let $\tilde{\pi}$ be an $n$-fold bilinear Pfister form with associated
totally singular form $\pi$, $q$ a nonsingular
quadratic form and $\phi\cong\tilde{\pi}\otimes q$.  Let $x\in F^*$
and suppose that $\phi$ is anisotropic and $\pi\perp\qf{x}\prec \phi$.
Then $\pi\perp x\pi\prec\phi$.
\end{lemma}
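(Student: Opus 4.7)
The plan is to exploit the commutative $F$-algebra structure on $V_{\tilde{\pi}}$: declaring $e_T\cdot e_{T'}:=a_{T\cap T'}\,e_{T\triangle T'}$ on the basis of $V_{\tilde{\pi}}$ diagonalizing $\tilde{\pi}$ turns $V_{\tilde{\pi}}$ into a commutative $F$-algebra which, by the anisotropy of $\pi$ forced by that of $\phi$, is the purely inseparable field extension $K:=F(\sqrt{a_1},\ldots,\sqrt{a_n})$ of degree $2^n$, with $\pi(\kappa)=\kappa^2$ for every $\kappa\in K$. Correspondingly, $V_{\phi}=K\otimes_F V_q$ becomes a $K$-vector space on which each $\kappa\in K$ acts by $L_{\kappa}(v\otimes w):=(\kappa v)\otimes w$. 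I would first verify three basic properties of these operators: (i) the similitude $\phi(L_{\kappa}(z))=\kappa^2\phi(z)$; (ii) the vanishing $b_{\phi}(L_{\kappa}(z),L_{\kappa'}(z))=0$ for all $\kappa,\kappa'\in K$ and $z\in V_{\phi}$, obtained by polarizing (i) using the characteristic-$2$ identity $(\kappa+\kappa')^2=\kappa^2+(\kappa')^2$; and (iii) self-adjointness $b_{\phi}(L_{\kappa}(z),w)=b_{\phi}(z,L_{\kappa}(w))$, which is a consequence of the associativity $\tilde{\pi}(\kappa v,w)=\tilde{\pi}(v,\kappa w)$ of $\tilde{\pi}$.

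Next I would fix $v_0\in V_{\phi}$ with $\phi(v_0)=x$ from the hypothesis and form the $K$-orbit $Kv_0:=\{L_{\kappa}(v_0):\kappa\in K\}$. Property (ii) shows $Kv_0$ is totally singular, property (i) identifies its value set as $xK^2=x\,D_F^0(\pi)$, and anisotropy of $\pi$ makes $\kappa\mapsto L_{\kappa}(v_0)$ injective, so $Kv_0$ is an $F$-subspace of dimension $2^n$ isometric to $x\pi$. The entire proof then reduces to the following main claim: \emph{there exists $z_0\in V_{\phi}$ with $\phi(z_0)=1$ and $z_0\in(Kv_0)^{\perp}$.} Indeed, granting such a $z_0$, the orbit $Kz_0$ is, analogously, a totally singular $F$-subspace of dimension $2^n$ isometric to $\pi$; self-adjointness gives
\[
b_{\phi}(L_{\kappa}(z_0),L_{\kappa'}(v_0))=b_{\phi}(z_0,L_{\kappa\kappa'}(v_0))=0,
\]
so $Kz_0\perp Kv_0$; and $Kz_0\cap Kv_0=0$, because $L_{\kappa}(z_0)=L_{\kappa'}(v_0)\neq 0$ would force $x=(\kappa/\kappa')^2\in D_F^*(\pi)$, contradicting the anisotropy of $\pi\perp\qf{x}$ (itself a subform of the anisotropic $\phi$, so in particular $x\notin D_F^0(\pi)$). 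Hence $Kz_0\oplus Kv_0\cong\pi\perp x\pi$ sits inside $\phi$, yielding $\pi\perp x\pi\prec\phi$.

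The hypothesis supplies a vector $z^{\mathrm{hyp}}\in V_{\phi}$ with $\phi(z^{\mathrm{hyp}})=1$ and $b_{\phi}(z^{\mathrm{hyp}},v_0)=0$, but this only places $z^{\mathrm{hyp}}$ in the codimension-$1$ subspace $v_0^{\perp}$, whereas $(Kv_0)^{\perp}$ has codimension $2^n$ in $V_\phi$; closing this gap is the heart of the matter. My strategy is to apply Lemma~\ref{subform}(iii) to the hypothesis to obtain a decomposition $\phi\cong\perp_T[a_T,d_T]\perp[x,d]\perp\tau$, and then combine this decomposition with the Pfister-multiple identity $\phi=\tilde{\pi}\otimes q$ to solve the system of orthogonality conditions $b_{\phi}(z_0,L_{\kappa}(v_0))=0$ ($\kappa$ ranging over an $F$-basis of $K$) together with the quadratic constraint $\phi(z_0)=1$, by modifying $z^{\mathrm{hyp}}$ via a correction term built from the $\pi$-orbit part of the embedding and the anisotropic complement $\tau$. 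Equivalently, the technical core is to show that the restricted form $\phi|_{(Kv_0)^{\perp}}$ represents $1$; I expect this representability step to be the main obstacle, since it requires transferring the single-vector orthogonality encoded in the hypothesis to the full $K$-linear orthogonality with respect to $Kv_0$, and it is precisely here that both the $\tilde{\pi}$-multiple structure of $\phi$ and the anisotropy assumption must be used essentially.
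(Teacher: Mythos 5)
There is a genuine gap. Your $K$-module formalism is set up correctly (the identification of $V_{\tilde{\pi}}$ with $K=F(\sqrt{a_1},\ldots,\sqrt{a_n})$, the similitude property, total singularity of orbits, and self-adjointness all check out), and the reduction of the lemma to your ``main claim'' --- that $\phi|_{(Kv_0)^{\perp}}$ represents $1$ --- is valid. But that claim is exactly what you do not prove: you state yourself that it is ``the heart of the matter'' and ``the main obstacle,'' and what you offer in its place is only a plan (``apply Lemma~\ref{subform}(iii)\ldots and then combine\ldots by modifying $z^{\mathrm{hyp}}$ via a correction term''). The hypothesis only hands you a vector of value $1$ orthogonal to the single vector $v_0$, and passing from orthogonality to $v_0$ to orthogonality to all of $Kv_0$ is a jump of codimension $2^n-1$ that nothing in your argument bridges. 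Note also that the claim is delicate: it is easy when $v_0$ is ``$K$-rational'' (a pure tensor adapted to a decomposition $q\cong[1,a]\perp[x,d]\perp\cdots$), but for an arbitrary vector of value $x$ the subspace $(Kv_0)^{\perp}$ is not controlled by such a decomposition, so the statement genuinely needs an argument (or a careful choice of $v_0$), which is missing.

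It is worth seeing how the paper's proof sidesteps this obstacle entirely, because the idea it uses is the one your approach lacks. Rather than forcing an orthogonal copy of $\pi\perp x\pi$ with the \emph{given} $x$, the paper writes $q\cong[1,a]\perp q'$ (roundness), computes in the Witt group that $\phi\perp\pi\perp\qf{x}\sim\tilde{\pi}\otimes q'\perp\pi\perp\qf{x}$ must be isotropic for dimension reasons, and deduces $x=u+v$ with $u\in D_F^0(\tilde{\pi}\otimes q')$, $v\in D_F^0(\pi)$, $u\neq 0$. Roundness then gives $q'\cong[u,w]\perp q''$, whence $\pi\perp u\pi\prec\phi$ \emph{visibly} (each summand sits in a different orthogonal block $\tilde{\pi}\otimes[1,a]$, $\tilde{\pi}\otimes[u,w]$). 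The final step is the key trick you are missing: since totally singular forms are classified by dimension and value set, and $F^2(r_1,\ldots,r_n,u)=F^2(r_1,\ldots,r_n,x)$ because $u=x+v$ with $v\in F^2(r_1,\ldots,r_n)$, one gets $\pi\perp u\pi\cong\pi\perp x\pi$. In other words, one should not try to realize $x$ itself orthogonally to a copy of $\pi$ inside $\phi$; one realizes a corrected scalar $u$ for which the orthogonality is free, and then identifies the two quasi-Pfister forms afterwards. If you want to salvage your approach, this substitution $x\rightsquigarrow u$ is the ingredient to import.
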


\begin{proof}  Since $1\in D_F(\pi)$ we have $1\in D_F(\phi)$ and the
roundness of $\tilde{\pi}$ implies that $q$ can be chosen to represent
$1$ as well, i.e. we may assume $q\cong[1,a]\perp q'$ for suitable
$a\in F$ and nonsingular $q'$.  Now 
$\pi\perp\qf{x}\prec \phi$ implies 
$$\dim(\phi\perp\pi\perp\qf{x})_{\ani} =\dim\phi -2^n-1\ .$$
We have $\tilde{\pi}\otimes[1,a]\perp\pi\sim\pi$ and thus
$$\phi\perp\pi\perp\qf{x}\sim \tilde{\pi}\otimes q'\perp\pi\perp\qf{x}\ .$$
The form on the right hand side has dimension 
$\dim\phi -2^n+1>\dim\phi -2^n-1$ and
is therefore isotropic.  But $\tilde{\pi}\otimes q'\perp\pi$ is 
anisotropic as it is dominated by $\phi$.  Hence,
$x\in D_F^*(\tilde{\pi}\otimes q'\perp\pi)$ and there exist
$u\in D_F^0(\tilde{\pi}\otimes q')$, $v\in D_F^0(\pi)$ with $x=u+v$.

If $u=0$ then $x=v\in D_F^*(\pi)$ and $\pi\perp\qf{x}$ would be isotropic,
a contradiction.  Thus, $u\neq 0$ and we may assume, again by the roundness
of Pfister forms, $q'\cong [u,w]\perp q''$ for some $w\in F$ and nonsingular
$q''$ and we see that 
$$\pi\perp u\pi\prec \tilde{\pi}\otimes ([1,a]\perp [u,w]\perp q'')\cong\phi\ .$$ 
Now suppose $\pi\cong\pff{r_1,\ldots, r_n}$, $r_i\in F$.
Then $\pi\perp x\pi\cong\pff{r_1,\ldots, r_n,x}$ and
$\pi\perp u\pi\cong\pff{r_1,\ldots, r_n,u}$.
Since $v\in D_F^0(\pi)=F^2(r_1,\ldots, r_n)$ and with $u=x+v$, we get
$$D_F^0(\pi\perp u\pi)=F^2(r_1,\ldots, r_n,u)=F^2(r_1,\ldots, r_n,x)
=D_F^0(\pi\perp ux\pi)$$ 
and thus
$\pi\perp x\pi\cong \pi\perp u\pi$, hence $\pi\perp x\pi\prec\phi$.
\end{proof}

\section{Quartic field extensions in characteristic $2$}
In this section we recall well known and some perhaps lesser 
known facts about quartic extensions in characteristic $2$ which
we will need later on.  We omit 
proofs as these results belong to basic field and Galois theory.

Let $F$ be a field of characteristic $2$ and let $E/F$ be a field extension
with $[E:F]=4$.    Such extensions can be classified as follows.

\subsection{Simple extensions of degree $4$}\label{simple} 
In this case,
let $E=F(\alpha)$ and let $f(X)=X^4+aX^3+bX^2+cX+d\in F[X]$ be
the minimal polynomial of $\alpha$ over $F$.  We
distinguish $4$ (sub)cases.

\begin{case1}
$E/F$ is separable.  This is the case iff $a\neq 0$ or $c\neq 0$.
We may assume (possibly after replacing $\alpha$ by $\alpha^{-1}$ and/or a
linear change of variables) that the minimal polynomial
of $\alpha$ is of shape $X^4+aX^3+cX+d\in F[X]$ with $a\neq 0$.
\end{case1}

\begin{case2}
$E/F$ is inseparable but not purely inseparable.  This is the case
iff the minimal polynomial is of shape $f(X)=X^4+bX^2+d$ with $b\neq 0$.
Note that the separable closure of $F$ inside $E$ is then the
(uniquely determined) separable quadratic subextension $F(\alpha^2)$.

We now consider the extension $F(\sqrt{b},\sqrt{d})/F$.
Since $f$ is irreducible, it is not possible that both $b,d\in F^2$.
Hence $[F(\sqrt{b},\sqrt{d}):F]=2$ or $4$ and we distinguish the respective
subcases.
\begin{enumerate}
\item[2a.]  $[F(\sqrt{b},\sqrt{d}):F]=2$.  This is the case iff
the inseparable closure of $F$ in $E$ is a (uniquely determined) 
inseparable quadratic extension $F(\sqrt{c})$ for a certain $c\in F\setminus F^2$.
In this case, one therefore has that $E=F(\sqrt{c},\alpha^2)$ is
``mixed'' biquadratic.  In field theory, such an algebraic extension
is often called balanced as it is the compositum of its
maximal inseparable and maximal separable subextensions.

One can furthermore show that this subcase holds iff
the equation $x^2+by^2+dz^2=0$ has
a nontrivial solution $(x,y,z)\neq (0,0,0)$ with $x,y,z\in F$, i.e.
iff the totally singular quadratic form $\qf{1,b,d}$ is isotropic over $F$.
We will refer to this case also as the mixed biquadratic case.
\item[2b.]  $[F(\sqrt{b},\sqrt{d}):F]=4$.  This is the case iff
the inseparable closure of $F$ in $E$ is just $F$ itself, i.e.
$E/F$ does \emph{not} contain an inseparable quadratic subextension.
So in particular, this extension is unbalanced.
This subcase holds iff $\qf{1,b,d}$ is anisotropic over $F$.
\end{enumerate}
\end{case2}

\begin{case3}  $E/F$ is purely inseparable. This is the case
iff the minimal polynomial is of shape $f(X)=X^4+d$, so
$E=F(\sqrt[4]{d})$ for some $d\in F\setminus F^2$.
\end{case3} 

\subsection{Nonsimple extensions of degree $4$}\label{nonsimple}    
$E/F$ is nonsimple
iff $E/F$ is biquadratic
purely inseparable:  $E=F(\sqrt{a},\sqrt{b})$ for suitable $a,b\in F^*$.

\subsection{The cubic resolvent of a polynomial of degree $4$}\label{cubic}
Let $f(X)=X^4+aX^3+bX^2+cX+d\in F[X]$ and let $\alpha_i$, $1\leq i\leq 4$
be the roots of $f$ in an algebraic closure of $F$. The $\alpha_i$
need not be distinct.  Let now
$$\beta_1 =  \alpha_1\alpha_2+\alpha_3\alpha_4\ ,\quad
\beta_2  =  \alpha_1\alpha_3+\alpha_2\alpha_4\ ,\quad
\beta_3  =  \alpha_1\alpha_4+\alpha_2\alpha_3\ .$$
The cubic resolvent of $f$ (in any characteristic) is then given by
$$f_C(X)=\prod_{i=1}^3(X-\beta_i)=X^3-bX^2+(ac-4d)X-(a^2d+c^2-4bd)\in F[X]\ .$$
To avoid confusion, let us mention that
in the literature, one often finds an alternative version of the
cubic resolvent where the $\beta_i$ are replaced by
$\gamma_1=(\alpha_1+\alpha_2)(\alpha_3+\alpha_4)$,
$\gamma_2=(\alpha_1+\alpha_3)(\alpha_2+\alpha_4)$,
$\gamma_3=(\alpha_1+\alpha_4)(\alpha_2+\alpha_3)$.  
Note that $\beta_i+\gamma_i=b$, so one readily gets 
that the cubic resolvent $\widetilde{f}_C$ in this version is given by
$$\widetilde{f}_C(X)=\prod_{i=1}^3(X-\gamma_i)=
X^3-2bX^2+(b^2+ac-4d)X+(c^2+a^2d-abc)=-f_C(-X+b)$$
For our purposes, we will need $f_C$.

Since we are in characteristic $2$, the formula simplifies:
$$f_C(X)=X^3+bX^2+acX+(a^2d+c^2)\ .$$
Now for a simple degree $4$ extension $E=F(\alpha)$ in characteristic $2$
where $\alpha$ has 
minimal polynomial $f(X)$ as described in the above cases 1--3, we get
the following corresponding cubic resolvents:
\begin{enumerate}
\item[1.] Minimal polynomial $f(X)=X^4+aX^3+cX+d$:  
$f_C(X)=X^3+acX+(a^2d+c^2)$.
\item[2.] Minimal polynomial $f(X)=X^4+bX^2+d$:  
$f_C(X)=X^3+bX^2$.
\item[3.] Minimal polynomial $f(X)=X^4+d$:  
$f_C(X)=X^3$.
\end{enumerate}

\section{Results on Witt kernels for finite field extensions}

One 
has $W(E/F)=0$ resp. $W_q(E/F)=0$ for odd degree extensions 
$E/F$ since anisotropic forms stay
anisotropic over odd degree extensions, a result often referred to as
Springer's theorem \cite{sp} but that has apparently been proved earlier 
by E.~Artin in a communication to E.~Witt (1937).  

Let us now assume that $\chr(F)\neq 2$. It is well known 
that if $E=F(\sqrt{d})$ is a quadratic extension, then $W(E/F)$ is generated 
by the norm form $\qf{1,-d}$ of that extension. The case $[E:F]=4$ is 
considerably more difficult to treat.  For biquadratic
extensions $E=F(\sqrt{a},\sqrt{b})$ it was shown by 
Elman-Lam-Wadsworth \cite{elw1} that $W(E/F)$ is generated
by $\qf{1,-a}$ and $\qf{1,-b}$, 
and the case of degree $4$ extensions containing a quadratic subextension
can be found in Lam-Leep-Tignol \cite{llt}.
  
The complete determination of Witt kernels for arbitrary degree $4$ 
extensions is due to Sivatski \cite{si}.  To formulate his result,
first note that in characteristic not $2$ the degree $4$ extension $E/F$
will be separable and hence simple, say, $E=F(\alpha)$.  Let
$f(X)\in K[X]$ be the minimal polynomial of $\alpha$.  We may assume 
(after a linear change of variables) that $f(X)=X^4+bX^2+cX+d$.
Recall from subsection \ref{cubic} that then
$\widetilde{f}_C(X)=X^3-2bX^2+(b^2-4d)X+c^2$

\begin{theo}{\rm (Sivatski \cite[Cor.~2, Cor.~4]{si}.)}  Let $F$ be a 
field of characteristic not $2$ and $E/F$ a
field extension of degree $4$.  Let $E=F(\alpha)$ be such that
$f(X)=X^4+bX^2+cX+d\in K[X]$ is the minimal polynomial of $\alpha$.
Then $W(E/F)$ is generated by $1$-fold Pfister forms 
$\pff{D}$ for all $D\in F^*\setminus F^{*2}$ with
$F(\sqrt{D})\subseteq E$, and $2$-fold Pfister forms
$\pff{\widetilde{f}_C(r),-r}$ with $r\in F^*$ such that 
$\widetilde{f}_C(r)\neq 0$.
\end{theo}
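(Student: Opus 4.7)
\medskip

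The plan is to prove both inclusions between $W(E/F)$ and the subgroup generated by the forms $\pff{D}$ (with $F(\sqrt{D})\subseteq E$) and the forms $\pi_r := \pff{\widetilde{f}_C(r),-r}$. For the easy direction, $\pff{D}$ is metabolic over $F(\sqrt{D})$ and hence over any overfield, in particular over $E$. For $\pi_r$ I would identify $2$-fold Pfister forms with the Witt class of their associated quaternion algebra, so that it suffices to prove that $(\widetilde{f}_C(r),-r)_E$ is split. Using the factorisation $\widetilde{f}_C(r)=\prod_{i=1}^{3}(r-\gamma_i)$ with $\gamma_i=(\alpha_i+\alpha_j)(\alpha_k+\alpha_l)$ the roots of the cubic resolvent in an algebraic closure of $F$, I would exhibit an explicit $z\in E(\sqrt{-r})$ satisfying $(-r)\,N_{E(\sqrt{-r})/E}(z)\equiv \widetilde{f}_C(r) \pmod{E^{*2}}$; since $-r$ itself is a norm from $E(\sqrt{-r})/E$, this forces $\widetilde{f}_C(r)$ to be a norm, and the quaternion algebra therefore splits.

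For the nontrivial inclusion I will split according to the Galois group $G$ of the Galois closure of $E/F$, which is a transitive subgroup of $S_4$. In the biquadratic case $G\cong V_4$ with $E=F(\sqrt{a},\sqrt{b})$, Elman--Lam--Wadsworth yields $W(E/F)=\pff{a}WF+\pff{b}WF$, and each generator is of the desired form $\pff{D}$ with $F(\sqrt{D})\subseteq E$. In the cases $G\cong \mathbb{Z}/4$ or $G\cong D_4$, the field $E$ contains a unique quadratic subfield $F(\sqrt{D})$; Lam--Leep--Tignol describes $W(E/F)$ as generated by $\pff{D}$ together with certain $2$-fold Pfister forms. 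Here the cubic resolvent is reducible, and I would rewrite the Lam--Leep--Tignol generators in the announced shape $\pff{\widetilde{f}_C(r),-r}$ by a direct parameter comparison using the linear factor of $\widetilde{f}_C$ over $F$.

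The remaining case $G\cong A_4$ or $G\cong S_4$, in which $E$ contains no quadratic subfield and hence only the $\pi_r$ appear as generators, is where the main obstacle lies. Given an anisotropic $\phi\in W(E/F)$ I would induct on $\dim\phi$: choose $r\in F^*$ with $\widetilde{f}_C(r)\neq 0$ and pass to $F':=F(\sqrt{-r})$. The Galois closure of $EF'/F'$ is then a proper subextension of the Galois closure of $E/F$, so $EF'/F'$ falls into one of the previously treated cases, and $\phi_{F'}$ can be written over $F'$ in terms of Pfister forms described in those cases. The Scharlau transfer $s_*\colon W(F')\to W(F)$ together with the projection formula $s_*(\pi\cdot\psi_{F'})=s_*(\pi)\cdot\psi$ then transports this representation back to one of $\phi$ inside $W(E/F)$. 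The technical heart will be to check that the images under $s_*$ of the $F'$-level generators match, modulo Pfister forms of smaller dimension already handled by induction, the announced forms $\pff{\widetilde{f}_C(r'),-r'}$ for $r'\in F^*$; this amounts to a calculation of how the cubic resolvent of $f$ transforms under the quadratic scalar extension $F\subseteq F'$, combined with the fact that $-r$ becomes a square in $F'$ so that the relevant Pfister form collapses to a $1$-fold one whose transfer is computable in closed form.
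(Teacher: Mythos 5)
There is a genuine gap. First, note that the paper does not prove this theorem at all: it is quoted verbatim from Sivatski, and the method Sivatski uses (which the paper adapts to characteristic $2$ in Section 5) is entirely different from yours. It rests on the representation-theoretic Proposition \ref{sivatski}: an anisotropic $\phi\in W(E/F)$ dominates a subform $\phi_0$ of dimension $2$ or $3$ representing $\lambda f(X)$ over $F[X]$; comparing coefficients and performing an explicit change of variables exhibits a $3$-dimensional $\phi_0$ as a Pfister neighbor of some $\pff{\widetilde{f}_C(r),-r}$, and one concludes by induction on $\dim\phi$. Your Galois-theoretic descent is a different route, and its key step fails. In the case $\Gal\cong A_4$ the group has no subgroup of index $2$, so the Galois closure $N$ of $E/F$ contains \emph{no} quadratic extension of $F$; consequently, for every $r$ the field $F'=F(\sqrt{-r})$ is linearly disjoint from $N$ and $\Gal(NF'/F')\cong A_4$ again. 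Your claim that the Galois closure of $EF'/F'$ is a proper subextension of that of $E/F$ is therefore false in exactly the case you identify as the main obstacle (and for $S_4$ it holds only for the single square class of the discriminant, which merely drops you to $A_4$, where you are stuck).

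Even granting a reduction, the transfer step does not return $\phi$. Knowing $\phi_{F'}$ determines $\phi$ only modulo $\ker\bigl(W(F)\to W(F')\bigr)=\pff{-r}\,W(F)$, and $\pff{-r}$ need not lie in $W(E/F)$. Concretely, the projection formula gives $s_*(\phi_{F'})=s_*(\qf{1})\cdot\phi$, where $s_*(\qf{1})$ is either $0$ (for the standard trace-zero transfer) or a binary form similar to $\pff{-r}$; in the first case you get no information, and in the second you obtain a generator expression for $\pff{-r}\phi$, not for $\phi$. Some additional mechanism for removing this factor, or for combining the data from varying $r$, would be needed and is not supplied. Finally, the ``easy'' inclusion for $\pff{\widetilde{f}_C(r),-r}$ is asserted rather than proved: you would need the explicit element witnessing that $\widetilde{f}_C(r)$ is a norm from $E(\sqrt{-r})/E$ (the characteristic-$2$ analogue is the explicit substitution in Lemma \ref{cubicpfister}), and producing it is precisely where the cubic resolvent identity enters.
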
 
 
Little is known for Witt kernels of higher even degree extensions.  
For triquadratic extensions $E=F(\sqrt{a_1},\sqrt{a_2},\sqrt{a_3})$ it is shown
by Elman-Lam-Tignol-Wadsworth \cite{eltw} that generally
$\sum_{i=1}^3\qf{1,-a_i}W(F)\subsetneq W(E/F)$, but an explicit description
of generators of $W(E/F)$ seems not to be known.  
Under strong assumptions on the base field, more can be said.  
For example, for local or
global fields, it is known that 
$W(F(\sqrt{a_1},\ldots,\sqrt{a_n})/F)=\sum_{i=1}^n\qf{1,-a_i}W(F)$, see
Elman-Lam-Wadsworth \cite{elw2}.

Let us now consider Witt kernels $W_q(E/F)$ for quadratic forms in 
characteristic $2$. For quadratic extensions $E/F$, see 
Proposition \ref{quadkern}.  The following summarizes the known results 
for separable biquadratic extensions and for multiquadratic extensions
of separability degree at most $2$.

\begin{theo}\label{multiquad} Let $F$ be a field of characteristic $2$, let
$\alpha_1,\ldots,\alpha_n,\beta_1,\beta_n$ be nonzero elements in 
an algebraic closure of $F$ such that there exist 
$a_1,\ldots,a_n$, $b_1,b_2\in F^*$ with $\alpha_i^2=a_i$ ($1\leq i\leq n$)
and $\beta_i^2+\beta_i=b_i$ ($i=1,2$).  Let $E=F(\alpha_1,\ldots,\alpha_n)$,
$E'=F(\alpha_1,\ldots,\alpha_n,\beta_1)$, $E''=F(\beta_1,\beta_2)$.
\begin{enumerate}
\item[{\rm (i)}] {\rm (Aravire-Laghribi \cite{al}; Mammone-Moresi \cite{mm}
for $n=2$.)} 
$$W_q(E/F)=\sum_{i=1}^n\qf{1,a_i}_b\otimes W_q(F)\ .$$
\item[{\rm (ii)}] {\rm (Aravire-Laghribi \cite{al}; Ahmad \cite{a2} for $n=1$)}
$$W_q(E'/F)=\sum_{i=1}^n\qf{1,a_i}_b\otimes W_q(F)+W(F)\otimes [1,b_1]\ .$$
\item[{\rm (iii)}] {\rm (Baeza \cite[4.16]{b2}.)} 
$$W_q(E''/F)=W(F)[1,b_1]+W(F)[1,b_2]\ .$$
\end{enumerate}
\end{theo}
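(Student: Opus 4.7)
The plan is to establish all three parts by induction on the relevant exponent, with Proposition~\ref{quadkern} supplying both the base cases and the engine of each inductive step. In every case, the inclusion $\supseteq$ is immediate: $\langle 1,a_i\rangle_b$ becomes metabolic over $F(\alpha_i)\subseteq E$ and $[1,b_j]$ becomes hyperbolic over $F(\beta_j)\subseteq E$, so tensoring with arbitrary forms produces elements of the Witt kernel. The content lies in the reverse inclusion.

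For (i), I would induct on $n$, with $n=1$ being Proposition~\ref{quadkern}(i). For the inductive step, let $\phi$ be anisotropic over $F$ with $\phi_E$ hyperbolic, and set $K=F(\alpha_1,\ldots,\alpha_{n-1})$. Since $\phi_K$ becomes hyperbolic over the quadratic extension $E=K(\alpha_n)$, Proposition~\ref{quadkern}(i) gives $\phi_K\sim\langle 1,a_n\rangle_b\otimes q$ for some nonsingular $q$ over $K$. The crucial task is to realise this relation by a form $\tilde q$ defined over $F$ modulo $\sum_{i<n}\langle 1,a_i\rangle_b\otimes W_qF$. I would approach this by working not with the Witt-class decomposition over $K$ but with the anisotropic form $\phi$ over $F$: Lemma~\ref{quad}(i) forces $\phi$ to contain an $F$-rational subform of the shape $\lambda\langle 1,a_n\rangle$, and iterated application of Lemma~\ref{pfistermult} (via the roundness of bilinear Pfister forms and the fact that each $a_i\in F$) upgrades such data to an $F$-rational factor $\langle 1,a_n\rangle_b$ in $\phi$. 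Splitting this factor off reduces the situation to a form in $W_q(K/F)$, and the induction hypothesis applied to the extension $K/F$ of degree $2^{n-1}$ finishes the argument.

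For (ii), the strategy is analogous: write $E'=F(\alpha_1,\ldots,\alpha_n)(\beta_1)$ and apply Proposition~\ref{quadkern}(ii) at the top Artin--Schreier step to extract a summand in $W(E)\otimes[1,b_1]$, then invoke part (i) to handle the residual multiquadratic descent from $E=F(\alpha_1,\ldots,\alpha_n)$ down to $F$. For (iii), both steps of the tower $F\subset F(\beta_1)\subset F(\beta_1,\beta_2)$ are Artin--Schreier, and one applies Proposition~\ref{quadkern}(ii) at each, again with the descent argument supplying $F$-rational coefficients modulo the lower-level kernel $W(F)[1,b_1]$.

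The principal obstacle throughout is this descent: Proposition~\ref{quadkern} naturally produces a decomposition whose ``coefficient'' form is defined over an intermediate field $K$, whereas the target statement demands coefficients over $F$. Overcoming this requires reasoning on the anisotropic form over $F$ itself, leveraging the fact that the splitting data $a_i, b_j\in F$ are $F$-rational so that the Pfister factors $\langle 1,a_i\rangle_b$ and $[1,b_j]$ already live over $F$. Lemma~\ref{pfistermult}, combined with the roundness of bilinear and quadratic Pfister forms, provides exactly the mechanism needed to promote a pointwise subform relation over $F$ into a Pfister-factor divisibility over $F$, thereby closing the induction.
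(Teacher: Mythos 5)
The paper does not actually prove Theorem \ref{multiquad}: it is quoted from the literature (Aravire--Laghribi \cite{al}, Mammone--Moresi \cite{mm}, Ahmad \cite{a2}, Baeza \cite{b2}), and only the quartic special cases --- part (iii) and part (ii) with $n=1$ --- are rederived later in \S 5 as consequences of Theorem \ref{main}. So the question is whether your sketch would stand on its own as a proof, and it would not: the descent step, which you correctly single out as the principal obstacle, is exactly where the argument breaks down, and the tools you propose do not carry it.

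Concretely: (1) Lemma \ref{quad}(i) with $a=a_n$ requires $\phi$ to become isotropic over $F(\alpha_n)$, but an anisotropic form in $W_q(E/F)$ need not become isotropic over that particular quadratic subextension (the anisotropic part of $\qf{1,a_1}_b\otimes[1,c]$ lies in the kernel yet can stay anisotropic over $F(\alpha_n)$), so the asserted subform $\lambda\qf{1,a_n}\prec\phi$ need not exist. (2) Even when it does, a $2$-dimensional totally singular subform is very far from an $F$-rational tensor factor $\qf{1,a_n}_b$ of the nonsingular form $\phi$; Lemma \ref{pfistermult} cannot bridge this gap, because its hypothesis is that the ambient form is \emph{already} of the shape $\tilde{\pi}\otimes q$ for the Pfister form in question --- it upgrades subforms inside a known Pfister multiple, it does not produce Pfister divisibility. (3) Most importantly, the theorem does not claim that an anisotropic $\phi\in W_q(E/F)$ is divisible by a single $\qf{1,a_i}_b$, only that its Witt class is a \emph{sum} of such multiples; aiming to split off an $F$-rational factor $\qf{1,a_n}_b$ is aiming at a statement that is false in general. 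A sanity check that something essential is missing: in characteristic $\neq 2$ the exact analogue of part (i) for $n=3$ is \emph{false} (Elman--Lam--Tignol--Wadsworth \cite{eltw}, quoted in \S 4 of this paper), yet your argument uses nothing specific to characteristic $2$ beyond the $F$-rationality of the $a_i$, which holds there as well. Any correct proof must inject genuinely new input at the descent stage --- Aravire--Laghribi work through Kato's correspondence with differential forms, and the quadratic-form-theoretic alternative in \cite{h2} is a substantial argument in its own right --- so the sketch as it stands is an outline of the difficulty rather than a resolution of it.
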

For a different proof of parts (i) and (ii), see \cite{h2}.

For simple totally inseparable degree $4$ extensions, i.e. Case 3 in
section \ref{simple}, Ahmad has shown the following.
\begin{theo}{\rm (Ahmad \cite{a3}.)}\label{simpleinsep} 
Let $F$ be a field of characteristic $2$, let 
$\alpha$ be a nonzero element in 
an algebraic closure of $F$ such that there exists
$a\in F\setminus F^2$ with $\alpha^4=a$.  Let $E=F(\alpha)$.   
Then $W_q(E/F)$
is generated by $2$-fold quadratic Pfister forms of type
$\qpf{a,x}$ and $\qpf{x,ax^2y^2}$ with $x\in F^*$ and $y\in F^2(a)^*$.
\end{theo}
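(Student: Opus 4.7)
My plan is to prove both inclusions using the tower $F\subset K:=F(\sqrt a)=F(\alpha^2)\subset E$ of purely inseparable quadratic extensions. For $\supseteq$, the form $\qpf{a,x}$ is hyperbolic already over $K$ by Proposition~\ref{quadkern}(i). For $\qpf{x,ax^2y^2}$ with $y=u^2+av^2\in F^2(a)^*$, setting $s:=u+\alpha^2 v\in K$ makes $y=s^2$ in $E$, so $ax^2y^2=(\alpha^2 xy)^2=(x(\alpha s)^2)^2$ over $E$; the identity $[1,c^2]\cong [1,c]$ in characteristic~$2$ (via $X\mapsto X+cY$) then gives $\qpf{x,ax^2y^2}_E\cong \qpf{x,x(\alpha s)^2}$, which is hyperbolic because $\qpf{x,xw^2}\cong [1,xw^2]\perp [x,w^2]$ has the obvious isotropic vector $(w,0,0,1)$ and a Pfister form is hyperbolic once isotropic.

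For the reverse inclusion I take an anisotropic $\phi\in W_q(E/F)$ and induct on $\dim\phi$. A direct computation with the $F$-basis $\{1,\alpha,\alpha^2,\alpha^3\}$ of $E$ shows that $\wp(E)\cap F=\wp(F)$ (expanding $e=\sum e_i\alpha^i$ and isolating coefficients of $\alpha^i$ in $\wp(e)=e^2+e$ forces $e_1=e_3=0$ and then $e_2=0$), handling the base case $\dim\phi\le 2$. For the inductive step, split on the behavior of $\phi_K$. If $\phi_K$ is isotropic, Lemma~\ref{quad}(i) gives $\lambda\qf{1,a}\prec\phi$ for some $\lambda\in F^*$, and Lemma~\ref{subform}(iii) yields $\phi\cong [\lambda,d_1]\perp [\lambda a,d_2]\perp \tau$. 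Since $\lambda\qpf{a,d_1/\lambda}=\pff{a}_b\otimes [\lambda,d_1]\cong [\lambda,d_1]\perp [\lambda a,d_1/a]$, the element $\phi+\lambda\qpf{a,d_1/\lambda}\sim [\lambda a,d_2+d_1/a]\perp\tau$ lies in $W_q(E/F)$ with strictly smaller anisotropic dimension, and $\lambda\qpf{a,d_1/\lambda}$ belongs to the $WF$-submodule generated by the $\qpf{a,\cdot}$, closing this case by induction.

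The harder case is when $\phi_K$ is anisotropic. Then by Proposition~\ref{quadkern}(i) applied to $E/K$, $\phi_K\in W_q(E/K)=\pff{\alpha^2}_b\otimes W_qK$, and roundness of $\pff{\alpha^2}_b$ lets me extract an orthogonal summand $\qpf{\alpha^2,c}$ of $\phi_K$ with $c=c_1+c_2\alpha^2\in K^*$. Using the first-paragraph identification $\qpf{x,ax^2y^2}_K\cong \qpf{x,\alpha^2 xy}$, the plan is to find $x\in F^*$ and $y=u^2+av^2\in F^2(a)^*$ such that $\qpf{x,ax^2y^2}$ cancels the $\qpf{\alpha^2,c}$-summand over $K$ modulo an element of $W_q(K/F)$ (to be absorbed by the $\qpf{a,\cdot}$ generators via the previous case). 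The main obstacle is exactly this descent from a $K$-subform of $\phi_K$ to an $F$-generator of the prescribed form: one must match the decomposition $c=c_1+c_2\alpha^2$ with $(x,u,v)$ coherently, and the constraint $y\in F^2(a)^*$ appears precisely because $y$ must be a square in $K$ for the first-paragraph simplification. Once matched, subtracting the generator yields a new element of $W_q(E/F)$ of strictly smaller anisotropic dimension (or one falling under the isotropic-$\phi_K$ case), and induction completes the argument.
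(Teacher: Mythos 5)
First, a remark on the comparison you asked for: the paper does not prove Theorem \ref{simpleinsep} at all --- it quotes it from Ahmad's paper \cite{a3}, and only later (in \S 5, after proving Theorem \ref{main}) rewrites Ahmad's generators in terms of its own type (b)/(c) generators. So your attempt has to stand on its own. The parts of it that are actually carried out are fine: the inclusion $\supseteq$ is correct ($\qpf{a,x}$ dies already over $K=F(\alpha^2)$ by Proposition \ref{quadkern}(i), and your computation $ax^2y^2=(x(\alpha s)^2)^2$ with $s=u+\alpha^2v$, together with $[1,c^2]\cong[1,c]$ and the explicit isotropic vector, is valid); the base case via $\wp(E)\cap F=\wp(F)$ is correct; and the case where $\phi_K$ is isotropic is sound, up to the harmless slip that $\pff{a}_b\otimes[\lambda,d_1]=\lambda\qpf{a,\lambda d_1}$ rather than $\lambda\qpf{a,d_1/\lambda}$.

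The genuine gap is the case where $\phi_K$ is anisotropic, which is where the whole content of the theorem lives, and there you have written a plan rather than a proof. Two things are missing. (1) The descent itself: from $\phi_K\cong\pff{\alpha^2}_b\otimes q$ you can extract a summand $\qpf{\alpha^2,c}\prec\phi_K$ with $c=c_1+c_2\alpha^2\in K$, but you give no argument producing $x\in F^*$, $y\in F^2(a)^*$ with $\qpf{x,ax^2y^2}_K$ cancelling it. This is not bookkeeping: over $K$ your generators restrict to $\qpf{\alpha^2,\alpha^2xy}$ with $\alpha^2xy\in\alpha^2F^*$, and $\qpf{\alpha^2,c}=\qpf{\alpha^2,c'}$ in $W_qK$ iff $c+c'\in\wp(K)+\alpha^2K^2$; unwinding this forces, among other things, $c_1\in\wp(F)+aF^2$, a nontrivial arithmetic condition on $c$ that must be extracted from the $F$-rationality of $\phi$ --- exactly the hard step you defer. (2) Even granting such a cancellation over $K$, your induction is on $\dim\phi$ over $F$: subtracting an $F$-defined generator whose restriction kills a summand of $\phi_K$ lowers $\dim(\phi_K)_{\ani}$ but need not lower $\dim(\phi\perp\text{generator})_{\ani}$ over $F$, so the inductive step does not close. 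The missing ingredient is an $F$-rational low-dimensional subform argument: either Ahmad's original one, or the route this paper takes for Theorem \ref{main}, namely Proposition \ref{sivatski}/Corollary \ref{sivatskicor}, which produces a $2$- or $3$-dimensional $q_0\prec q$ representing $\lambda f(X)$ over $F[X]$ and lets one read the generator off the coefficients of $q_0$ over $F$. (Alternatively, within this paper the statement follows from Theorem \ref{main} in case 3, since there the type (b) generators reduce to $\qpf{a,h}$ and $\qpf{f_C(e),\frac{d}{e^2}}=\qpf{e,ae^2(e^{-2})^2}$ is of Ahmad's second type with $y=e^{-2}$.)
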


Again, not much else is known for other types of finite
algebraic extensions in characteristic $2$.

Finally, consider Witt kernels $W(E/F)$ for 
Witt rings of bilinear forms in characteristic
$2$. It is not difficult to show that if $E/F$ is separable, then $W(E/F)=0$ 
(Knebusch \cite{kn1}).
In \cite{h2}, Witt kernels for a large class of purely 
inseparable algebraic extensions
have been determined.  In particular, the following was shown.

\begin{theo}{\rm (Hoffmann \cite{h1}.)} Let
$E$ be a purely inseparable extension 
of exponent $1$
over a field $F$ of characteristic $2$ (i.e. $E^2\subset F\subset E$).
Then $W(E/F)$ is generated by bilinear forms
$\qf{1,t}_b$ where $t\in E^{*2}$. 
\end{theo}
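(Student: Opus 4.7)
The plan is first to dispatch the easy inclusion, then to reduce to finite subextensions, and finally to argue by induction on the number of square-root generators. For the easy inclusion, if $t = s^2 \in E^{*2}$ with $s \in E^*$, then over $E$ the bilinear form $\qf{1,t}_b$ has square-function $x^2 + s^2 y^2 = (x+sy)^2$, which vanishes at $(x,y)=(s,1)$; so $\qf{1,t}_b \otimes E$ is a $2$-dimensional isotropic nonsingular bilinear form and therefore metabolic. Hence the ideal $I \subseteq WF$ generated by the forms $\qf{1,t}_b$ with $t \in E^{*2}$ satisfies $I \subseteq W(E/F)$, and it remains to establish the reverse inclusion.

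For the reverse inclusion, since every bilinear form is finite-dimensional, any $\phi \in W(E/F)$ becomes metabolic already over some finitely generated subextension $E_0 = F(\sqrt{a_1},\dots,\sqrt{a_n}) \subseteq E$, and one may induct on $n$. The base case $n=1$, with $L = F(\sqrt{a})$ a purely inseparable quadratic extension, is the bilinear analog of Proposition \ref{quadkern}(i), a classical result: an anisotropic bilinear form $\phi$ over $F$ satisfies $\phi_L \sim 0$ iff $\phi \cong \qf{1,a}_b \otimes c$ for some bilinear form $c$ over $F$, so $W(L/F) = \qf{1,a}_b \cdot WF$. This is proved by first establishing a bilinear isotropy criterion parallel to Lemma \ref{quad}(i), namely that $\phi_L$ is isotropic iff $\lambda\qf{1,a}_b$ occurs as an orthogonal summand of $\phi$ for some $\lambda \in F^*$, and then iteratively splitting off such summands until the complement becomes anisotropic over $L$ (in which case metabolicity forces it to be zero).

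For the inductive step, take $n \geq 2$, let $\phi$ be anisotropic over $F$ with $\phi_{E_0} \sim 0$, and put $L := F(\sqrt{a_n})$. If $\phi_L$ is isotropic, the base case yields $\lambda \in F^*$ and a decomposition $\phi \cong \lambda\qf{1,a_n}_b \perp \phi'$; since $\qf{1,a_n}_b$ becomes metabolic over $E_0$, one has $\phi' \in W(E_0/F)$ as well, and induction on $\dim\phi$ applies. The main obstacle is the case where $\phi_L$ remains anisotropic: then $\phi_L \in W(E_0/L)$, and the outer induction applied to the extension $E_0/L$ (purely inseparable of exponent $1$, generated by $n-1$ square roots) yields a decomposition of $\phi_L$ in $WL$ as a sum of products $\qf{1,t_i}_b \cdot c_i$ with $t_i \in L^* \cap E_0^{*2}$ and $c_i$ bilinear over $L$. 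Descending this relation from $WL$ to $WF$ is the technical heart of the argument; the natural tool is a Scharlau transfer $s_*\colon WL \to WF$ attached to a suitable nonzero $F$-linear functional $L \to F$, chosen so that $s_* \circ \mathrm{res}_{L/F}$ acts as multiplication by $\qf{1,a_n}_b$. Combining this with Frobenius reciprocity pushes the $L$-decomposition down to an $F$-decomposition whose generators remain of the required form $\qf{1,t}_b$ with $t \in E^{*2}$, possibly multiplied by the additional admissible factor $\qf{1,a_n}_b$, which itself lies in $I$ since $a_n \in E^{*2}$.
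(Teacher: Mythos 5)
The paper does not actually prove this statement --- it is quoted from \cite{h1} --- so your argument has to stand entirely on its own. The easy inclusion, the reduction to a finite subextension $E_0=F(\sqrt{a_1},\dots,\sqrt{a_n})$, and the base case $n=1$ (the bilinear isotropy criterion over $F(\sqrt{a})$ and the resulting identity $W(F(\sqrt{a})/F)=\qf{1,a}_b\otimes WF$) are all fine. The gap is exactly where you locate the ``technical heart'': the inductive step when $\phi_L$ stays anisotropic for $L=F(\sqrt{a_n})$. Write $I\subseteq WF$ for the ideal generated by the forms $\qf{1,t}_b$, $t\in E^{*2}$. Your transfer does satisfy $s_*\circ\mathrm{res}_{L/F}=\qf{1,a_n}_b\otimes(-)$ for the functional with $s(1)=1$, $s(\sqrt{a_n})=0$, and applying it to $\phi_L=\sum_i\qf{1,t_i}_b\otimes c_i$ together with Frobenius reciprocity yields precisely
$$\qf{1,a_n}_b\otimes\phi=\sum_i\qf{1,t_i}_b\otimes s_*(c_i)\in I\ .$$
But this conclusion is vacuous: since $a_n\in E^{*2}$, the form $\qf{1,a_n}_b$ is itself one of the generators of $I$, and $I$ is an ideal, so $\qf{1,a_n}_b\otimes\phi\in I$ holds for \emph{every} $\phi\in WF$ with no hypothesis whatsoever. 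In $WF$ the relation reads $\phi\perp a_n\phi\in I$, i.e.\ $\phi\equiv a_n\phi\bmod I$, which gives no handle on $\phi$ itself: one has $\phi\in I\Leftrightarrow a_n\phi\in I$ (multiply by $\qf{a_n}_b$ and use $a_n^2\in F^{*2}$), so the congruence is symmetric and uninformative. Nor can you cancel the factor $\qf{1,a_n}_b$, which is a nilpotent zero divisor ($\pff{a_n,a_n}_b$ is metabolic). Finally, no better choice of functional exists: for any nonzero $F$-linear $s\colon L\to F$ the composite $s_*\circ\mathrm{res}_{L/F}$ is multiplication by $\lambda\qf{1,a_n}_b$ for some $\lambda\in F$ (possibly $0$), never by a unit, reflecting the fact that $\mathrm{res}_{L/F}$ has a large kernel and admits no retraction.

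What is missing is the actual hard content of the theorem: one must show that a nonzero anisotropic $\phi\in W(E_0/F)$ always becomes isotropic over \emph{some} quadratic subextension $F(\sqrt{c})\subset E_0$, equivalently that some $\lambda\qf{1,c}_b$ with $c\in E_0^{*2}\setminus F^{*2}$ splits off orthogonally, so that one is always in your first case and the induction on dimension can proceed. That statement is genuinely nontrivial; it is what \cite{h1} (and, in sharper form, Theorem \ref{bilinearkernel}(ii) via \cite{dh}) establishes, using norm fields of the associated totally singular forms and a structural decomposition of $\phi$ into scalar multiples of bilinear Pfister forms attached to $F(\sqrt{a_1},\dots,\sqrt{a_n})$ --- a Pfister-multiple argument in the spirit of Lemma \ref{pfistermult} rather than a transfer. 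As written, your induction never gets past the case where $\phi_L$ is anisotropic.
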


Now the nonsimple degree $4$ extensions $E$ of a field $F$ of 
characteristic $2$ are exactly the biquadratic purely inseparable
extensions $E=F(\sqrt{a_1},\sqrt{a_2})$ and thus they are exactly
the purely inseparable exponent $1$ extensions of degree $4$, so 
this case is covered by the previous theorem.

The case of simple degree $4$ extensions  
follows from a much more general result
shown in \cite{dh}, where (in characteristic $2$) $W(E/F)$ was determined  
for \emph{arbitrary} function fields of hypersurfaces, i.e. 
for extensions $E/F$ where $E$ is the quotient
field of $F[X]/(f(X))$ for an irreducible polynomial 
$f(X)=f(X_1,\ldots,X_n)\in F[X_1,\ldots,X_n]$.  In the case of a simple
extension of even degree, this result boils down to the following.

\begin{theo}\label{bilinearkernel}  
Let $E=F(\alpha)$ be a simple extension of even degree $n$
of a field $F$ of characteristic $2$, and let
$X^n+a_{n-1}X^{n-1}+\ldots +a_1X+a_0\in F[X]$ be the minimal polynomial 
of $\alpha$.
\begin{enumerate}
\item[{\rm (i)}] If $E/F$ is separable (i.e. $a_i\neq 0$ for some odd $i$), then
$W(E/F)=0$.
\item[{\rm (ii)}] If $E/F$ is inseparable (i.e. $a_i=0$ for all odd $i$), 
let $K= F(\sqrt{a_0},\ldots ,\sqrt{a_{n-2}})$.
Then $[K:F]=2^s$ for some $1\leq s\leq \frac{n}{2}$, and
$W(E/F)$ is generated by the $s$-fold bilinear Pfister forms
$\pff{b_1,\ldots,b_s}_b$ with $b_i\in F^*$ such that
$F(\sqrt{b_1},\ldots,\sqrt{b_s})=K$.

More precisely, any anisotropic bilinear form $\phi$ over $F$
with $\phi\in W(E/F)$ can be written as
$\phi\cong\lambda_1\pi_1\perp\ldots\perp\lambda_n\pi_n$ for
suitable $n\in\NN$, $\lambda_i\in F^*$ and where the $\pi_i$ are 
$s$-fold bilinear Pfister forms of the above type.
\end{enumerate}
\end{theo}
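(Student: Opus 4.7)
Part (i) is immediate from Knebusch's theorem cited earlier: for a separable extension $E/F$ in characteristic $2$, the restriction $WF\to WE$ is injective, forcing $W(E/F)=0$. For Part (ii), first observe that $K = F(\sqrt{a_0},\ldots,\sqrt{a_{n-2}})$ is a purely inseparable exponent $1$ extension of $F$ with $[K:F] = 2^s$, where $1 \leq s \leq n/2$; the upper bound is the count of generators, while the lower bound uses the irreducibility of $f$ (if all $a_{2i}$ were squares in $F$, then $f$ would itself be a square in characteristic $2$).

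The ``easy'' containment asserts that $\pff{b_1,\ldots,b_s}_b \in W(E/F)$ whenever $F(\sqrt{b_1},\ldots,\sqrt{b_s}) = K$. The plan is to select a basis $a_{2j_1},\ldots,a_{2j_s}$ of $K$ over $F$ from among $\{a_0,a_2,\ldots,a_{n-2}\}$ and to write each $a_{2i}$ (with the convention $a_n = 1$) as $a_{2i} = \mu_i^2 \prod_k a_{2j_k}^{\epsilon_{i,k}}$ for suitable $\mu_i \in F$ and $\epsilon_{i,k}\in\{0,1\}$. The defining relation $\sum_{i=0}^{n/2} a_{2i}\alpha^{2i} = 0$ for $\alpha$ over $F$ then rewrites in $E$, using $\chr(F)=2$, as
\[
\sum_{\epsilon\in\{0,1\}^s}\Bigl(\prod_k a_{2j_k}^{\epsilon_k}\Bigr)\Bigl(\sum_{i\,:\,\epsilon_i=\epsilon}\mu_i\alpha^i\Bigr)^{\!2} \;=\; 0 ,
\]
a nontrivial isotropy relation over $E$ for $\pff{a_{2j_1},\ldots,a_{2j_s}}_b$ (its $\epsilon=0$ component contains $\alpha^{n/2}$ and is therefore nonzero in $E$). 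Roundness of bilinear Pfister forms then promotes isotropy to metabolicity over $E$. Since two bilinear Pfister forms are isometric whenever their generators span the same subgroup of $F^*/F^{*2}$, every Pfister form of the stated type lies in $W(E/F)$; and so do all orthogonal sums of scalar multiples, since $W(E/F)$ is an ideal of $WF$.

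The ``hard'' containment is the assertion that any anisotropic $\phi \in W(E/F)$ admits an isometric decomposition $\phi \cong \lambda_1\pi_1 \perp \cdots \perp \lambda_m\pi_m$ with each $\pi_i$ of the prescribed type. I would argue by induction on $\dim\phi$ via the following subform criterion: if $\phi$ is anisotropic over $F$ and metabolic over $E$, then some scalar multiple $\lambda\pi$ of an $s$-fold Pfister form $\pi = \pff{b_1,\ldots,b_s}_b$ with $F(\sqrt{b_1},\ldots,\sqrt{b_s}) = K$ splits off as an orthogonal summand of $\phi$. The complementary summand is then again anisotropic and in $W(E/F)$, and induction concludes (and forces $\dim\phi$ to be a multiple of $2^s$). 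This subform criterion is precisely the content of the general function-field kernel theorem of \cite{dh} specialized to the zero-dimensional hypersurface $f(X)=0$, and its proof, which proceeds via Scharlau transfer and an induction reducing to the purely inseparable case settled by the Hoffmann theorem above, is where the essential difficulty resides; this is the main obstacle.
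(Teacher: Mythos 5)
Your proposal takes essentially the same route as the paper: part (i) is exactly Knebusch's theorem, and for part (ii) the paper likewise gives no argument beyond citing \cite[Cor.~11.4]{dh}, which is where you also locate the hard containment. Your added verification of the easy containment is correct in substance (only note that $a_{2i}$ need not be a square times a \emph{single} monomial in the chosen $a_{2j_k}$, but rather an $F^2$-linear combination of such monomials; the isotropy computation goes through unchanged with sums in place of monomials).
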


\begin{proof}
Part (i) of this theorem follows from Knebusch's result \cite{kn1}
mentioned above.  Part (ii)
follows from \cite[Cor.~11.4]{dh}.
\end{proof}

\section{Witt kernels of simple quartic extensions}
We start with a proposition that is essentially 
due to Sivatski \cite[Prop.~1]{si}.
He assumed the characteristic to be different from $2$ but the proof
goes through basically without any changes also in characteristic 
$2$ and also for singular quadratic forms.  Our formulation
is slightly different. 
\begin{prop}\label{sivatski}   Let $m$ be a positive integer.
Let $f(X)\in F[X]$ be an irreducible
polynomial of even degree $2m$, $\alpha$ be a root
of $F$ in some algebraic closure of $F$ and $E=F(\alpha)$.  
Let $\phi$ be an anisotropic quadratic form over $F$ and assume
that over $E$, one has 
$i_t(\phi_E)>\bigl(\frac{1}{2}-\frac{1}{2m}\bigr)\dim\phi$.  Then
there exists a form $\phi_0\prec\phi$ such that 
$2\leq\dim\phi_0\leq m+1$ and $\phi_0$ represents $\lambda f(X)$ for
some $\lambda\in F^*$ over $F[X]$.  In particular, $(\phi_0)_E$ is isotropic.  
\end{prop}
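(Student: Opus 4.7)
The plan is to adapt Sivatski's linear-algebra and degree-counting argument to the characteristic $2$ setting, which also accommodates singular $\phi$. Let $V$ be the underlying $F$-vector space of $\phi$ and $d=\dim\phi$. Since each element of $E=F(\alpha)$ has a unique representative as a polynomial in $\alpha$ of degree $<2m$, I would identify $V_E$ as an $F$-vector space with polynomial vectors $v(X)\in F[X]^d$ of degree $<2m$, and let $L_k\subseteq V_E$ denote the $F$-subspace of those with $\deg v(X)\leq k-1$. Then $\dim_F V_E=2md$ and $\dim_F L_k=kd$.

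Next I would fix a totally isotropic $E$-subspace $W\subseteq V_E$ of $E$-dimension $t:=i_t(\phi_E)$, which has $F$-dimension $2mt$. The hypothesis $t>(\tfrac12-\tfrac{1}{2m})d$ is equivalent to $2mt+(m+1)d>2md$, so a dimension count in $V_E$ forces $W\cap L_{m+1}\neq 0$. Picking a nonzero $v(\alpha)\in W\cap L_{m+1}$, the representative $v(X)\in F[X]^d$ satisfies $\deg v(X)\leq m$ and is nonzero in $F[X]^d$ by uniqueness of the representation. Since $v(\alpha)$ is isotropic, $f(X)\mid\phi(v(X))$ in $F[X]$; combined with $\deg\phi(v(X))\leq 2m=\deg f$, this forces $\phi(v(X))=\lambda f(X)$ for some $\lambda\in F$. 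A standard leading-coefficient argument shows that $\phi$ remains anisotropic over the purely transcendental extension $F(X)$: if $d_0=\max_i\deg v_i$ and $c\in V$ is the tuple of $X^{d_0}$-coefficients of $v(X)$, then the coefficient of $X^{2d_0}$ in $\phi(v(X))$ equals $\phi(c)\neq 0$. Hence $\phi(v(X))\neq 0$ and $\lambda\in F^*$.

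Finally, writing $v(X)=\sum_{k=0}^m w_kX^k$ with $w_k\in V$, I would let $U\subseteq V$ be the $F$-span of $w_0,\ldots,w_m$ and set $\phi_0:=\phi|_U$. Then $\phi_0\prec\phi$ via the inclusion $U\hookrightarrow V$, $\dim\phi_0\leq m+1$, and $\phi_0$ represents $\lambda f(X)$ over $F[X]$ through the vector $v(X)$ with coefficients in $U$. The lower bound $\dim\phi_0\geq 2$ holds because a one-dimensional form $\qf{c}$ represents over $F[X]$ only $c$ times a square, which cannot equal $\lambda f(X)$ with $f$ irreducible of degree $2m\geq 2$. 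Isotropy of $(\phi_0)_E$ is immediate from $\phi_0(v(\alpha))=\lambda f(\alpha)=0$. The only real subtlety compared to Sivatski's original proof is ensuring the preservation of anisotropy under $F\subseteq F(X)$ when $\phi$ may be singular, which is precisely what the leading-coefficient computation above handles uniformly.
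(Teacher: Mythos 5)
Your argument is correct and is essentially the proof the paper has in mind: the paper gives no details, deferring to Sivatski's Proposition~1 with the remark that his argument carries over verbatim to characteristic $2$ and to singular forms, and what you have written is precisely that argument (identify $V_E$ with polynomial vectors of degree $<2m$, intersect a maximal totally isotropic subspace with the degree-$\leq m$ layer via the dimension count $2mt+(m+1)d>2md$, and use the leading-coefficient computation to get $\lambda\neq 0$). Your explicit verification that the leading-coefficient step needs no nonsingularity hypothesis is exactly the point the authors wave at when they say the proof works ``also for singular quadratic forms.''
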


\begin{coro}\label{sivatskicor}  
Let $E/F$ be a simple field extension with $[E:F]=4$ and
let $\phi$ be an anisotropic form in $W_q(E/F)$.  Then
there exists a form $\phi_0\prec\phi$ with $2\leq\dim\phi_0\leq 3$
and such that $(\phi_0)_E$ is isotropic.
\end{coro}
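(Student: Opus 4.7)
The plan is to apply Proposition \ref{sivatski} directly, with the degree-four extension playing the role of the degree $2m$ extension for $m=2$. First I would observe that since $E/F$ is simple of degree $4$, we may write $E=F(\alpha)$ where the minimal polynomial $f(X)\in F[X]$ of $\alpha$ is irreducible of even degree $2m=4$. Thus $f$ satisfies the hypothesis of Proposition \ref{sivatski} with $m=2$, and the threshold appearing there becomes
\[
\bigl(\tfrac12-\tfrac1{2m}\bigr)\dim\phi \;=\; \tfrac14\dim\phi.
\]

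Next I would verify the total-index hypothesis. Since $\phi$ is an anisotropic element of $W_q(E/F)$, it is a nonsingular quadratic form over $F$ (hence of even dimension) whose scalar extension $\phi_E$ is hyperbolic. This means $i_W(\phi_E)=\tfrac12\dim\phi$ and, because $\phi_E$ is nonsingular, $i_d(\phi_E)=0$, so
\[
i_t(\phi_E)\;=\;\tfrac12\dim\phi\;>\;\tfrac14\dim\phi,
\]
as required, provided $\dim\phi>0$ (the case $\dim\phi=0$ being vacuous).

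With the hypothesis in hand, Proposition \ref{sivatski} yields a subform $\phi_0\prec\phi$ with $2\leq\dim\phi_0\leq m+1=3$ which represents $\lambda f(X)$ for some $\lambda\in F^*$ over $F[X]$. Specializing $X\mapsto\alpha$ and using $f(\alpha)=0$ then gives a nontrivial zero of $\phi_0$ in $E$, so $(\phi_0)_E$ is isotropic. This is exactly the conclusion of the corollary.

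There is no real obstacle here: the corollary is an immediate specialization of Proposition \ref{sivatski}, the only substantive point being the bookkeeping that a nonsingular anisotropic form becoming hyperbolic over $E$ automatically satisfies the quantitative total-index hypothesis needed to invoke the proposition.
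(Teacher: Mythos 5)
Your proof is correct and is precisely the intended derivation: the paper states the corollary without proof as an immediate specialization of Proposition \ref{sivatski} to $m=2$, and your verification that an anisotropic nonsingular form becoming hyperbolic over $E$ has $i_t(\phi_E)=\tfrac12\dim\phi>\tfrac14\dim\phi$ is exactly the bookkeeping that makes the proposition apply.
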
 

From now on, let $F$ be a field of characteristic $2$ and
$E=F(\alpha)$ be a simple degree $4$ extension where $\alpha$ has minimal
polynomial $f(X)=X^4+aX^3+bX^2+cX+d$ with cubic resolvent
$f_C(X)=X^3+bX^2+acX+a^2d+c^2$.

\begin{lemma}\label{cubicpfister}
Let $e\in F^*$ be such that $f_C(e)\neq 0$.  Then
$\qpf{f_C(e),\frac{d}{e^2}}\in W(E/F)$.
\end{lemma}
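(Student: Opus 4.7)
The plan is to prove hyperbolicity of $\qpf{f_C(e), d/e^2}_E$ by exhibiting $f_C(e)$ as a norm from an appropriate quadratic extension of $E$. I would first dispose of two degenerate situations: if $d/e^2 \in \wp(F)$ then $[1, d/e^2]$ is already hyperbolic over $F$, and if the quadratic extension $K := F(\wp^{-1}(d/e^2))$ is contained in $E$ then $[1, d/e^2]_E$ is hyperbolic, so in either case the Pfister form vanishes in $W_qE$ for a trivial reason. This leaves the case where $EK/E$ is a genuine quadratic extension; I denote its nontrivial Galois involution by $\sigma$, which is the restriction of the nontrivial element of $\Gal(K/F)$ swapping the two roots $\rho$ and $\rho+e$ of the auxiliary quadratic $Y^2+eY+d$.

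The heart of the proof is a near-factorization of $f(X)$ over $K$. Fixing a root $\rho \in K$ with $\rho^2 = e\rho + d$, I would attempt to write $f(X) = (X^2 + \mu X + \rho)(X^2 + \mu' X + (\rho+e))$ with $\mu, \mu' \in K$. Matching the coefficients of $X^3$ and $X^1$ forces $\mu = c/e + (a/e)\rho$ and $\mu' = \mu + a$, after which the constant term $\rho(\rho+e) = d$ falls out automatically. The $X^2$-coefficient, however, refuses to cooperate, and a direct calculation produces the identity
\begin{equation*}
(X^2 + \mu X + \rho)(X^2 + \mu' X + (\rho+e)) = f(X) + \frac{f_C(e)}{e^2}\, X^2 \quad \text{in } K[X].
\end{equation*}
In other words, $f_C(e)$ is precisely the obstruction to splitting $f$ as a product of two quadratics over $K$ with $\lambda + \lambda' = e$, which is the intrinsic reason $f_C$ appears as the cubic resolvent.

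Specializing at $X = \alpha$ and using $f(\alpha) = 0$ gives
\begin{equation*}
(\alpha^2 + \mu\alpha + \rho)(\alpha^2 + \mu'\alpha + (\rho+e)) = \frac{f_C(e)}{e^2}\,\alpha^2 \quad \text{in } EK.
\end{equation*}
Since $\sigma$ fixes $\alpha \in E$ and swaps the two factors on the left, the left-hand side equals $N_{EK/E}(\alpha^2 + \mu\alpha + \rho)$. Rescaling by the square $(e/\alpha)^2 \in N_{EK/E}(EK^*)$ exhibits $f_C(e)$ itself as a norm from $EK/E$. Because $d/e^2 \notin \wp(E)$ in this remaining case, the standard isotropy criterion for $2$-fold quadratic Pfister forms says that $\qpf{f_C(e), d/e^2}_E$ is isotropic iff $f_C(e) \in N_{EK/E}(EK^*)$; since an isotropic quadratic Pfister form is hyperbolic, we conclude. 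The main obstacle is locating the correct factorization ansatz and confirming that the discrepancy in the $X^2$-coefficient is cleanly captured by $f_C(e)/e^2$; once that polynomial identity is in hand, everything else reduces to Galois-theoretic bookkeeping.
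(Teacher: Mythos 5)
Your proof is correct, and the polynomial identity at its heart checks out: with $\rho^2+e\rho+d=0$, $\mu=\frac{c}{e}+\frac{a}{e}\rho$ and $\mu'=\mu+a$, the product $(X^2+\mu X+\rho)(X^2+\mu'X+\rho+e)$ has constant term $\rho(\rho+e)=d$, linear coefficient $\mu e+a\rho=c$, cubic coefficient $a$, and quadratic coefficient $e+\mu^2+a\mu=b+\frac{f_C(e)}{e^2}$, which is exactly the claimed discrepancy. The packaging, however, differs from the paper's. The paper observes that $[1,\frac{d}{e^2}]\perp\qf{f_C(e)}$ is a Pfister neighbor of $\qpf{f_C(e),\frac{d}{e^2}}$ and simply exhibits the explicit isotropy vector $(e\alpha^2+c\alpha,\ e(a\alpha+e),\ \alpha)$ over $E$, verifying in one line that the value is $e^2f(\alpha)=0$; no case distinction and no Galois theory is needed. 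Your argument reaches the same conclusion via the norm criterion for $\qpf{f_C(e),\frac{d}{e^2}}$ over the extension $EK/E$ with $K=F(\wp^{-1}(\frac{d}{e^2}))$, at the cost of first disposing of the degenerate cases where $\frac{d}{e^2}\in\wp(E)$. The two arguments are computationally identical: writing your norm element as $\alpha^2+\mu\alpha+\rho=\frac{1}{e}\bigl((e\alpha^2+c\alpha)+(a\alpha+e)\rho\bigr)$ and expanding $N_{EK/E}(x+y\rho)=x^2+exy+dy^2$ recovers, after clearing the factor $e^2$, precisely the paper's substitution into $X^2+XY+\frac{d}{e^2}Y^2+f_C(e)Z^2$. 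What your version buys is conceptual insight: it identifies $\frac{f_C(e)}{e^2}$ as the obstruction to factoring $f$ into two quadratics over $K$ with linear parts summing to $a$, which explains why the cubic resolvent governs these generators. What the paper's version buys is brevity and uniformity: a single substitution valid with no hypotheses on $\frac{d}{e^2}$ and no appeal to the norm-splitting criterion.
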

\begin{proof}  Note that $\phi:=[1,\frac{d}{e^2}]\perp \qf{f_C(e)}$ is
a Pfister neighbor of $\qpf{f_C(e),\frac{d}{e^2}}$.  Thus,
$\qpf{f_C(e),\frac{d}{e^2}}_E$ is hyperbolic iff 
$\phi_E$ is isotropic iff the polynomial equation
$$\phi(X,Y,Z)=X^2+XY+\frac{d}{e^2}Y^2+(e^3+be^2+ace+a^2d+c^2)Z^2=0$$
has a nontrivial solution over $E$.  
Now put 
$$(X,Y,Z)=(e\alpha^2+c\alpha, e(a\alpha+e),\alpha)\neq (0,0,0)$$
By substituting we get
$$\begin{array}{rcl}
\phi(e\alpha^2+c\alpha, e(a\alpha+e),\alpha) & = &
e^2\alpha^4+c^2\alpha^2+(e\alpha^2+c\alpha)e(a\alpha+e)
\\
 & & +da^2\alpha^2+de^2+(e^3+be^2+ace+a^2d+c^2)\alpha^2\\[1ex]
 & = & e^2(\alpha^4+a\alpha^3+b\alpha^2+c\alpha+d)\\[1ex]
 & = & e^2f(\alpha)=0
\end{array}$$
as desired.
\end{proof} 

By subsection \ref{simple}, we may from now on assume the following
regarding the minimal polynomial $f(X)=X^4+aX^3+bX^2+cX+d$.
\begin{itemize}
\item $b=0$ and $a\neq 0$ if $E/F$ is separable (case 1 in 
subsection \ref{simple});
\item $a=c=0$, $b\neq 0$ if $E/F$ inseparable but not purely
inseparable (case 2 in subsection \ref{simple});
\item $a=b=c=0$ if $E/F$ is purely inseparable 
(case 3 in subsection \ref{simple}).
\end{itemize}
\begin{theo}\label{main}
$W_q(E/F)$ is generated as $WF$-module by
\begin{enumerate}
\item[(a)] $[1,g]$ for those $g\in F$ such that $F(\wp^{-1}(g))\subset E$;
\item[(b)]$\qpf{g,h}$ for $h\in F$ and those $g\in F^*$ with 
$F(\sqrt{g})\subset E$;
\item[(c)] $\qpf{f_C(e),\frac{d}{e^2}}$ for $e\in F^*$ with $f_C(e)\neq 0$;
\item[(d)] (only in case 2) $\qpf{b,d,h}$ with $h\in F$.
\end{enumerate}
\end{theo}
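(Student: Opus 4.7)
The plan is to establish both the containment $M\subseteq W_q(E/F)$, where $M$ denotes the $WF$-submodule generated by the forms in (a)--(d), and the reverse $W_q(E/F)\subseteq M$.

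For the containment part, generators of type (a) lie in $W_q(F(\wp^{-1}(g))/F)\subseteq W_q(E/F)$ by Proposition \ref{quadkern}(ii), while generators of type (b) lie in $\qf{1,g}_b\otimes W_qF=W_q(F(\sqrt{g})/F)\subseteq W_q(E/F)$ by Proposition \ref{quadkern}(i). Type (c) is exactly Lemma \ref{cubicpfister}. For type (d), applicable only in case 2 where $f(X)=X^4+bX^2+d$, the identity $\alpha^4+b\alpha^2+d=0$ shows that the bilinear form $\pff{b,d}_b\cong\qf{1,b,d,bd}_b$ is isotropic over $E$ via the vector $(\alpha^2,\alpha,1,0)$, hence metabolic over $E$, so that $\qpf{b,d,h}=\pff{b,d}_b\otimes[1,h]$ is hyperbolic over $E$ for every $h\in F$.

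For the reverse inclusion, I would argue by induction on $\dim\phi$ for anisotropic $\phi\in W_q(E/F)$. By Corollary \ref{sivatskicor} there exists a subform $\phi_0\prec\phi$ with $2\leq\dim\phi_0\leq 3$ such that $(\phi_0)_E$ is isotropic. In each situation the goal is to identify a generator $\rho\in M$ and a scalar $\mu\in F^*$ such that the Witt class $\phi+\mu\rho$ has strictly smaller anisotropic dimension than $\phi$; since $W_qF$ is 2-torsion in characteristic 2 and $W_q(E/F)$ is closed under addition, this class remains in $W_q(E/F)$ and the induction hypothesis applies. If $\dim\phi_0=2$ and $\phi_0\cong\lambda[1,g]$ is nonsingular, then isotropy over $E$ forces $g\in\wp(E)$, so $F(\wp^{-1}(g))\subseteq E$, and Lemma \ref{subform}(ii) lets one subtract the type (a) generator $\lambda[1,g]$ from $\phi$. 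If $\dim\phi_0=2$ and $\phi_0\cong\qf{u,v}$ is totally singular, then $v/u\in E^2$ so $F(\sqrt{v/u})\subseteq E$; using Lemma \ref{subform}(iii) to write $\phi\cong[u,d_1]\perp[v,d_2]\perp\tau$, the type (b) generator $u\qpf{v/u,ud_1}\cong[u,d_1]\perp[v,ud_1/v]$ reduces $\phi$ in $W_qF$ to $[v,d_2+ud_1/v]\perp\tau$, of dimension $\dim\phi-2$.

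The 3-dimensional cases are more delicate. Since nonsingular quadratic forms in characteristic 2 have even dimension, $\phi_0$ has a 1-dimensional radical and decomposes either as $\phi_0\cong[c_1,d_1]\perp\qf{c_2}$ or as $\phi_0\cong\qf{c_1,c_2,c_3}$. The isotropy of $(\phi_0)_E$ translates into a polynomial identity in $\alpha$; expanding the coefficients in the $F$-basis $1,\alpha,\alpha^2,\alpha^3$ and reducing via $f(\alpha)=0$ produces explicit equations in $F$ whose shape matches precisely the cubic resolvent computation of Lemma \ref{cubicpfister}. From these equations one reads off an element $e\in F^*$ with $f_C(e)\neq 0$ producing a type (c) generator, or in case 2 an additional parameter producing a type (d) generator. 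Lemma \ref{pfistermult} is then invoked to upgrade a partial containment $\pi\perp\qf{x}\prec\phi$, where $\pi$ is the totally singular form attached to the bilinear Pfister factor of the target generator, to the full containment $\pi\perp x\pi\prec\phi$, so that a complete Pfister multiple can be subtracted and the induction proceeds.

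The hardest step will be handling the 3-dimensional totally singular case in case 2. There the cubic resolvent $f_C(X)=X^3+bX^2$ vanishes at both $e=0$ and $e=b$, so the type (c) family is strictly smaller than in cases 1 and 3 and does not by itself cover all relations arising from the isotropy analysis. The missing relations are precisely those reflecting the presence of the separable quadratic subextension $F(\alpha^2)=F(\wp^{-1}(d/b^2))\subseteq E$ together with the metabolicity of $\pff{b,d}_b$ over $E$, and these force the introduction of the extra Pfister family $\qpf{b,d,h}$ of type (d). Verifying that every relation extracted from a 3-dimensional subform matches one of (a)--(d) requires separate bookkeeping in each of the three cases of subsection \ref{simple}, and it is in case 2 that the interplay between $f_C$, the subextension $F(\alpha^2)$, and Lemma \ref{pfistermult} must be most carefully tracked.
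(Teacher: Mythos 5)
Your overall strategy (induction on dimension, Corollary \ref{sivatskicor} to extract a low-dimensional subform, subtract a generator) is the same as the paper's, and the first half is sound: the containment of types (a)--(c) is handled exactly as in the paper, and your argument for type (d) --- checking directly that $\qf{1,b,d,bd}_b$ is isotropic over $E$ via $(\alpha^2,\alpha,1,0)$, hence metabolic as an isotropic bilinear Pfister form, so that $\qpf{b,d,h}$ dies in $W_qE$ by the module structure --- is actually more elementary than the paper's appeal to Theorem \ref{bilinearkernel}. The $2$-dimensional reductions are also correct. But the $3$-dimensional analysis, which is the substance of the theorem, is asserted rather than proved, and in two places the assertion hides a real difficulty.

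First, the claim that the isotropy equations for a $3$-dimensional $\phi_0$ ``match the cubic resolvent computation'' and let one ``read off'' a type (c) generator is not a proof. The paper's route is concrete: Proposition \ref{sivatski} gives $q_0(uX^2+vX+w)=f(X)$, coefficient comparison pins down $q_0(X,Y,Z)=X^2+aXY+eY^2+(e+b)XZ+cYZ+dZ^2$ with $e=q_0(v)$, and an explicit invertible substitution identifies $q_0$ with $[1,\frac{d}{e^2}]\perp\qf{f_C(e)}$, a Pfister neighbor of the type (c) generator --- valid only when the relevant coefficient ($e$, resp.\ $e+b$ in case 2) is nonzero. You need this computation, or something equivalent, before any generator can be subtracted. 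Second, and more seriously, your framework cannot handle the residual case $q_0\cong\qf{1,b,d}$ (case 2b). You correctly sense that Lemma \ref{pfistermult} and the subextension $F(\alpha^2)$ enter here, but the actual difficulty is a descent problem you do not address: Lemma \ref{pfistermult} is applied over $L=F(\alpha^2)$ (using $q_L\cong\pff{\alpha^2}_b\otimes\psi$ from Proposition \ref{quadkern}) and yields only $\pff{b,d}_L\prec q_L$, which does \emph{not} imply $\pff{b,d}\prec q$ over $F$. When $\pff{b,d}\not\prec q$, the paper must first replace $q$ by $\widehat{q}=(q\perp\lambda[1,\frac{d}{b^2}])_{\ani}$ for a suitable $\lambda$ --- a correction by a type (a) generator that leaves the dimension \emph{unchanged} and only then forces $\pff{b,d}\prec\widehat{q}$. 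Your induction is set up to require a strict drop in anisotropic dimension at every step, so this case breaks it as stated; the gap is precisely the Witt-index bookkeeping showing $i_W(q\perp\lambda[1,\frac{d}{b^2}])=1$ and $i_W(\widehat{q}\perp\pff{b,d})=4$.
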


\begin{remark}\label{mainrem}  (i) Note that $[1,g]\neq 0\in W_q(F)$ iff $g\notin \wp(F)$ iff
$[F(\wp^{-1}(g)):F]=2$, in which case $F(\wp^{-1}(g))/F$ is a separable
quadratic extension.  Thus, such nonhyperbolic binary forms cannot show
up in the list of generators in $W_q(E/F)$ in case 3 
(purely inseparable extensions).

(ii) Simlarly, if $\qpf{g,h}$ is such that $F(\sqrt{g})\subset E$, then
in case 1 (separable extensions) this would imply $g\in F^{*2}$ and thus
$\qpf{g,h}=0\in W_q(F)$, so nonhyperbolic forms of this type cannot show up
 in the list of generators in $W_q(E/F)$ in case 1.

(iii) For forms of type $\qpf{b,d,h}$ with $h\in F$ in case $2$, note that
in the mixed biquadratic case 2a 
we have $[F(\sqrt{b},\sqrt{d}):F]=2$ which is equivalent
to $\qf{1,b,d}_b$ being isotropic by subsection \ref{simple}, hence
 $\qpf{b,d,h}$ is isotropic and hence $\qpf{b,d,h}=0\in W_q(F)$ in that case.
So nonhyperbolic forms of this type cannot show up
in the list of generators in $W_q(E/F)$ in case 2a.
\end{remark}

\begin{proof}[Proof of Theorem \ref{main}]
We first show that the above generators are indeed in $W_q(E/F)$.

\medskip

Generators of type (a):
If $F(\wp^{-1}(g))\subset E$ for $g\in F$, then $g\in\wp(E)$, hence
$\Delta([1,g])=g=0\in E/\wp(E)$ and thus $[1,g]_E=0\in W_q(E)$.

\medskip

Generators of type (b):  If $F(\sqrt{g})\subset E$ for $g\in F^*$ then
$g\in E^{*2}$, thus $(\qf{1,g}_b)_E$ is isotropic and hence metabolic,
so $(\pff{g}_b)_E=0\in WE$ and thus $\qpf{g,h}_E=0\in W_q(E)$.

\medskip

Generators of type (c):  This is Lemma \ref{cubicpfister}

\medskip 

Generators of type (d):  Here, by the previous remark, we may assume that
we are in case 2b with minimal polynomial $f(X)=X^4+bX^2+d$ with
$[F(\sqrt{b},\sqrt{d}):F]=4$.  By Theorem \ref{bilinearkernel} we have
$\pff{b,d}_b\in W(E/F)$ and thus $\qpf{b,d,h}\in W_q(E/F)$.

\medskip

We now prove that any form in $W_q(E/F)$ can be written as a sum
of scalar multiples of these generators.
Let $q$ be a form with $q\in W_q(E/F)$.  We use induction
on $\dim q=n$.  We may assume without loss of generality that $q$
is anisotropic  

If $n=2$, then we may assume after scaling that $q\cong [1,g]$ for
some $g\in F\setminus \wp(F)$.  Since $q_E=0\in W_q(E)$, we have
$g\in\wp(E)$ and it follows that $F(\wp^{-1}(g))\subset E$ and
therefore $q$ is of type (a).

\medskip

Now assume $n\geq 4$.  By Corollary \ref{sivatskicor}, there
exists a form $q_0\prec q$ with $\dim q_0\in\{ 2,3\}$ and $(q_0)_E$
isotropic.  If $\dim q_0=2$, we may have either (after scaling)
$q_0\cong [1,g]$ for some $g\in F\setminus \wp(F)$, or 
$q_0\cong\qf{1,g}$ for some $g\in F^*\setminus F^{*2}$.

If $q_0\cong [1,g]$, then $q_0$ is a generator of type (a) (as in case $n=2$),
and by Lemma \ref{subform} there exists a nonsingular form
$q'$ over $F$ with $q\cong q_0\perp q'$.  But $q, q'\in W_q(E)$, hence
also $q'\in W_q(E/F)$, and since $\dim q'=\dim q -2$, we are done
by induction.

If $q_0\cong \qf{1,g}$, then $(q_0)_E$ being isotropic is equivalent
to $g\in E^{*2}$, hence $F(\sqrt{g})\subset E$.  Furthermore,
by Lemma \ref{subform}, there exist $u,v\in F$ and a nonsingular form
$q'$ over $F$ with $q\cong [1,u]\perp g[1,v]\perp q'$.
Consider $\pi:=\qpf{g,v}$.  This is a generator of type (b), and 
in $W_q(F)$ we have 
$$\pi + q =[1,v]\perp g[1,v]\perp [1,u]\perp g[1,v]\perp q'
=[1,u+v]\perp q'\ .$$
Now $[1,u+v]\perp q'\in W_q(E/F)$ since $q,\pi\in W_q(E/F)$,
and $\dim([1,u+v]\perp q')=\dim(q)-2$.  Again, we are done
by induction.

\medskip

From now on we may therefore in addition assume that there is no
$2$-dimensional form $\phi\prec q$ with $\phi_E$ isotropic.
In particular, by Lemma \ref{quad}, $q$ will not become
isotropic over any quadratic intermediate extension 
$F\subset K\subset E$.

So let $q_0\prec q$ with $(q_0)_E$ isotropic and $\dim q_0=3$.
By Proposition \ref{sivatski}, $q_0$ represents a scalar
multiple of $f(X)$ over $F[X]$.  After
scaling $q$, we may assume that $q_0$ represents $f(X)$.
Let $U$ be the underlying $F$-vector space of $q_0$, and
let $B$ be the bilinear form associated with $q_0$.
The anisotropy of $q_0$ and a simple degree argument 
(using $\deg(f)=4$) show that there exist $u,v,w\in U$ such that
$q_0(uX^2+vX+w)=f(X)$, and therefore
$$\begin{array}{l}
q_0(u)X^4+B(u,v)X^3+(q_0(v)+B(u,w))X^2+B(v,w)X+q_0(w)\\[1ex]
=X^4+aX^3+bX^2+cX+d\ .\end{array}$$
Comparing coefficients yields
$$q_0(u)=1,\ B(u,v)=a,\ q_0(v)+B(u,w)=b,\ B(v,w)=c,\ q_0(w)=d\ .$$
Note also that $q_0(u\alpha^2+v\alpha+w)=f(\alpha)=0$, thus,
if $U'=\spa (u,v,w)$, we see that the form $q_0|_{U'}$ becomes
isotropic over $E$.  Since we assumed that no $2$-dimensional form
over $F$ dominated by $q$ becomes isotropic over $E$, we thus 
necessarily have that $U=U'$ and
$u,v,w$ are linearly independent.  In particular, 
$e:=q_0(v)\neq 0$ because $q_0$ is anisotropic. 
Writing the form $q_0$ as a 
homogeneous degree $2$ polynomial in three variables, we get
$$q_0(X,Y,Z)=X^2+aXY+eY^2+(e+b)XZ+cYZ+dZ^2\ .$$
We now make a further case distinction according to the types
of field extension.  We first treat the cases 1 and 3 before treating
the more difficult cases 2a and 2b.

\medskip

Cases 1 and 3  (separable and purely inseparable
extension): Here, we have $b=0$ (in the separable case we assumed
this without loss of generality), and we get
$$q_0(X,Y,Z)=X^2+aXY+eY^2+eXZ+cYZ+dZ^2\ .$$
We perform an invertible linear change of variables and obtain
$$q_0(X+cY,eY,\frac{1}{e}Z+aY)= 
X^2+(e^3+ace+da^2+c^2)Y^2+XZ+\frac{d}{e^2}Z^2$$
and therefore 
$${\textstyle q_0\cong [1,\frac{d}{e^2}]\perp\qf{f_C(e)}
\prec\qpf{f_C(e),\frac{d}{e^2}}\ .}$$
But then $\pi=\qpf{f_C(e),\frac{d}{e^2}}$ is a generator of type (c).
Since $q_0\prec q$, again by Lemma \ref{subform}, we can write
$q\cong [1,\frac{d}{e^2}]\perp f_C(e)[1,t]\perp q'$ for some
$t\in F$ and a nonsingular form $q'$.  But then, in $W_q(F)$,
$$\begin{array}{rcl}
\pi+q & = & [1,\frac{d}{e^2}]\perp f_C(e)[1,\frac{d}{e^2}]\perp
[1,\frac{d}{e^2}]\perp f_C(e)[1,t]\perp q'\\[1ex]
 & = & \underbrace{{\textstyle f_C(e)[1,t+\frac{d}{e^2}]\perp q'}}_{q''}\ .
\end{array}$$
Since $\pi, q\in W_q(E/F)$ we have $q''\in W_q(E/F)$, and
also $\dim(q'')=\dim q-2$ and we are done by induction.

\medskip

Case 2 (inseparable but not purely inseparable extension):
Here, we have $a=c=0$ and $b\neq 0$ and we get
$$q_0(X,Y,Z)=X^2+eY^2+(e+b)XZ+dZ^2\ .$$
First, assume $e'=e+b\neq 0$.  Then
$$q_0(X,e'Y,\frac{1}{e'}Z)=X^2+(e'^3+be'^2)Y^2+XZ+\frac{d}{e'^2}Z^2$$
and thus $q_0\cong [1,\frac{d}{e'^2}]\perp \qf{f_C(e')}$
and we can conclude as before.

Now suppose $e+b=0$, i.e. $e=b$. Then $q_0\cong\qf{1,b,d}$ 
is totally singular. The anisotropy of $q_0$ then
implies that we must be in case 2b.  
After scaling $q$ by $b$, we may assume $bq_0\cong\qf{1,b,bd}\prec q$.

Recall that $E=F(\alpha)$
with $\alpha^4+b\alpha^2+d=0$.  Put $\beta=\alpha^2$.  Then
$L=F(\beta)$ is the unique quadratic intermediate extension
$F\subset L\subset E$, and we have $L=F(\wp^{-1}(\frac{d}{b^2}))$
and $E=L(\sqrt{\beta})$.

Recall also, that by an earlier assumption, $q$ will be anisotropic over $L$
but hyperbolic over $E=L(\sqrt{\beta})$.
By Proposition \ref{quadkern}, there exists a nonsingular form $\psi$ over $L$ such that
$q_L\cong\pff{\beta}_b\otimes \psi$.    Note also that
$d=\beta b+\beta^2$ and thus
$$\qf{1,b,bd}_L\cong \qf{1,b,b(\beta b+\beta^2)}\cong\qf{1,\beta,b}
\prec q_L$$
where the last isometry holds since $\{ 1,b,b(\beta b+\beta^2)\}$ and
$\{ 1,b,\beta\}$ generate the same $L^2$-vector space inside $L$.
By Lemma \ref{pfistermult}, we then have $\pff{\beta,b}\prec q_L$. Also,
$L^2(\beta, b)=L^2(\beta b+\beta^2,b)=L^2(d,b)$ and therefore
$\pff{\beta,b}\cong\pff{b,d}_L$ and
$\pff{b,d}_L\prec q_L$.

Suppose first that $\pff{b,d}\prec q$.  Then there exist
$x,y,z,t\in F$ and a nonsingular form $q'$ over $F$ such that
$$q\cong [1,x]\perp b[1,y]\perp d[1,z]\perp bd[1,t]\perp q'$$
and thus, in $WF$, 
$$q +\qpf{b,d,x}=
\underbrace{b[1,x+y]\perp d[1,x+z]\perp bd[1,x+t]\perp q'}_{q''}$$
Since $\qpf{b,d,x}$ is a generator of type (d), we have 
$\qpf{b,d,x}, q\in W_q(E/F)$
and thus also $q''\in W_q(E/F)$.  But $\dim(q'')=\dim(q)-2$ and we are done
by induction.

Now suppose that that $\pff{b,d}\not\prec q$.   Together with
$\qf{1,b,bd}\prec q$, 
$\pff{b,d}_L\prec q_L$ and $q_L$ anisotropic,
we get that $i_W(q\perp\pff{b,d})=3$ and $i_W(q_L\perp\pff{b,d}_L)=4$.
Hence, there exists a nonsingular form $q'$ over $F$ with 
\begin{itemize}
\item $q\perp\pff{b,d}\cong 3\HH\perp q'\perp\pff{b,d}$,
\item $q'\perp \pff{b,d}$ anisotropic,
\item $(q'\perp\pff{b,d})_L$ isotropic, and 
\end{itemize}
By Lemma \ref{quad} applied to $L=F(\wp^{-1}(\frac{d}{b^2}))$, 
there exists $\lambda\in F^*$ such that
$\lambda[1,\frac{d}{b^2}]\prec q'\perp\pff{b,d}$, and hence there
exists a nonsingular form $q''$ over $F$ of dimension $\dim q'-2=\dim q-8$
with $q'\perp\pff{b,d}\cong q''\perp \lambda[1,\frac{d}{b^2}]\perp\pff{b,d}$.
We get by comparing dimensions
$${\textstyle q\perp \lambda[1,\frac{d}{b^2}]\perp\pff{b,d}\cong  
5\HH\perp q''\perp\pff{b,d}\ ,}$$
so $q\perp \lambda[1,\frac{d}{b^2}]$ cannot be anisotropic because then
$i_W(q\perp \lambda[1,\frac{d}{b^2}]\perp\pff{b,d})\leq\dim\pff{b,d}=4$,
a contradiction.
On the other hand $\lambda[1,\frac{d}{b^2}]\not\prec q$ because
otherwise $q_L$ would be isotropic, again a contradiction to an
earlier assumption.  As a consequence, 
$i_W(q\perp \lambda[1,\frac{d}{b^2}])=1$.  

Put
$\widehat{q}\cong (q\perp \lambda[1,\frac{d}{b^2}])_{\ani}$.  We then
have $\dim\widehat{q}=\dim q$.  Since $q\in W_q(E/F)$ and since
$[1,\frac{d}{b^2}]$ is a generator of type (a), it follows that
$\widehat{q}\in W_q(E/F)$.  Furthermore, the above shows that
$i_W(\widehat{q}\perp\pff{b,d})=4$ and thus $\pff{b,d}\prec \widehat{q}$.
By the same reasoning as before, we are done by induction if we replace
$q$ by $\widehat{q}$.  But since $q$ and $\widehat{q}$ only differ
by a scalar multiple of a generator of type (a), we are done.
\end{proof}

The next corollary will be crucial for determining Brauer 
kernels in the next section.

\begin{coro}\label{2-fold}
Let $\phi$ be an anisotropic nonsingular quadratic form  over
$F$ with $\phi\in W_q(E/F)$.  Suppose $\dim\phi= 4$ or $\dim\phi >4$
and  $\dim\phi = 2\bmod 4$.  Then there exists a $2$-fold quadratic 
Pfister form $\pi\in W_q(E/F)$ and a $3$-dimensional
Pfister neighbor $\psi$ of $\pi$ with $\psi\prec\phi$.
In particular, the $2$-fold Pfister forms in $W_q(E/F)$ are
exactly the ones of the following types:
\begin{enumerate}
\item[{\rm (a')}] $\qpf{h,g}$ for $h\in F^*$ and 
those $g\in F$ such that $F(\wp^{-1}(g))\subset E$;
\item[{\rm (b)}]$\qpf{g,h}$ for $h\in F$ and those $g\in F^*$ with 
$F(\sqrt{g})\subset E$;
\item[{\rm (c)}] $\qpf{f_C(e),\frac{d}{e^2}}$ 
for $e\in F^*$ with $f_C(e)\neq 0$.
\end{enumerate}
\end{coro}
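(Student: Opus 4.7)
The plan is to apply Corollary~\ref{sivatskicor} to $\phi$ to obtain a subform $\phi_0\prec\phi$ with $2\le\dim\phi_0\le 3$ and $(\phi_0)_E$ isotropic, and then, mirroring the case distinctions in the proof of Theorem~\ref{main}, to extract directly from $\phi_0$ a 2-fold quadratic Pfister form $\pi\in W_q(E/F)$ of one of the listed types together with a 3-dimensional Pfister neighbor $\psi$ of $\pi$ satisfying $\psi\prec\phi$.

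If $\dim\phi_0=2$, then after scaling $\phi$ by an appropriate unit (which preserves $W_q(E/F)$-membership and only changes the resulting Pfister form by an allowed rescaling), either $\phi_0\cong[1,g]$ with $F(\wp^{-1}(g))\subseteq E$ or $\phi_0\cong\qf{1,g}$ with $F(\sqrt{g})\subseteq E$. In the first situation, Lemma~\ref{subform}(ii) gives $\phi\cong[1,g]\perp\rho$ with $\rho$ nonsingular of dimension at least $2$; for any $h\in D_F^*(\rho)$, the form $\psi:=[1,g]\perp\qf{h}\prec\phi$ is a 3-dimensional Pfister neighbor of the type (a') Pfister $\pi:=\qpf{h,g}\in W_q(E/F)$. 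In the second, Lemma~\ref{subform}(iii) yields $\phi\cong[1,u]\perp g[1,v]\perp q'$ for suitable $u,v\in F$ and nonsingular $q'$, and then $\psi:=[1,u]\perp\qf{g}\prec\phi$ is a 3-dimensional Pfister neighbor of the type (b) Pfister $\pi:=\qpf{g,u}\in W_q(E/F)$. If $\dim\phi_0=3$, the computations in the proof of Theorem~\ref{main} show that after scaling and a linear change of variables $\phi_0$ is either of the form $[1,d/e^2]\perp\qf{f_C(e)}$ for some $e\in F^*$ with $f_C(e)\neq 0$ --- in which case $\psi:=\phi_0$ is a 3-dimensional Pfister neighbor of the type (c) Pfister $\pi:=\qpf{f_C(e),d/e^2}\in W_q(E/F)$ --- or else, only in case 2 with $e+b=0$, the totally singular form $\qf{1,b,d}$.

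The main obstacle is ruling out this last, totally singular, possibility. When $\dim\phi=4$ it cannot occur at all, since Lemma~\ref{subform}(iii) forces $\qf{1,b,d}\prec\phi$ to require $\dim\phi\ge 6$. When $\dim\phi>4$ and $\dim\phi\equiv 2\pmod{4}$, we exploit the quadratic intermediate extension $L=F(\wp^{-1}(d/b^2))=F(\beta)$ available in case 2 (with $\beta=\alpha^2$), for which $E=L(\sqrt\beta)$ is inseparable quadratic over $L$. If $\phi_L$ were anisotropic, then $\phi_L\in W_q(E/L)$ and Proposition~\ref{quadkern}(i) applied over $L$ would yield an isometry $\phi_L\cong\qf{1,\beta}_b\otimes q$ with $q$ a nonsingular quadratic form over $L$, forcing $\dim\phi=2\dim q\equiv 0\pmod{4}$ (since $\dim q$ is even), contradicting the dimension hypothesis. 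Hence $\phi_L$ is isotropic, so Lemma~\ref{quad}(ii) produces $\lambda\in F^*$ with $\lambda[1,d/b^2]\prec\phi$; since $L=F(\wp^{-1}(d/b^2))\subseteq E$, the 2-dimensional subform $\lambda[1,d/b^2]$ becomes isotropic already over $L\subseteq E$, and we are back in the nonsingular subcase of $\dim\phi_0=2$ treated above.

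Finally, the ``in particular'' classification follows by feeding an arbitrary anisotropic 2-fold Pfister form $\pi\in W_q(E/F)$ into the existence statement just proved (which applies since $\dim\pi=4$): this produces a 2-fold Pfister $\pi'\in W_q(E/F)$ of type (a'), (b), or (c) together with a 3-dimensional Pfister neighbor $\psi\prec\pi$ of $\pi'$. Being a 3-dimensional subform of the 4-dimensional anisotropic Pfister $\pi$, the form $\psi$ is an anisotropic Pfister neighbor of $\pi$ as well; since an anisotropic Pfister neighbor determines its associated Pfister form up to isometry, we conclude $\pi\cong\pi'$, so $\pi$ is of one of the listed types.
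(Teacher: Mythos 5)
Your argument is essentially the paper's own proof: reduce via Corollary~\ref{sivatskicor} and the computations from the proof of Theorem~\ref{main} to a binary or ternary subform, rule out the totally singular ternary possibility using the dimension hypothesis together with the behaviour of $\phi$ over the intermediate extension $L=F(\wp^{-1}(d/b^2))$, and then read off the classification of $2$-fold Pfister forms from the resulting $3$-dimensional Pfister neighbors. The only deviations are cosmetic: the paper first reduces to the case where $\phi$ stays anisotropic over every quadratic subextension of $E$ (an assumption your ternary-case computation from Theorem~\ref{main} implicitly needs, though its failure merely returns you to your binary case), and it settles the ``in particular'' statement by an Arf-invariant computation rather than by uniqueness of the Pfister form attached to an anisotropic Pfister neighbor.
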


\begin{proof}
If $\phi$ becomes isotropic already
over a quadratic subextension in $E$, then by Lemma \ref{quad} we 
may assume that after scaling $\qf{1,g}\prec\phi$ with
$F(\sqrt{g})\subset E$, or $[1,g]\prec\phi$ with 
$F(\wp^{-1}(g))\subset E$.  It is then clear that one can find
an $h\in F$ such that in the first case 
$\psi\cong [1,h]\perp\qf{g}\prec\phi$, and then $\psi$ is a Pfister
neighbor of $\qpf{g,h}$, and such that in the second case
$\psi\cong [1,g]\perp\qf{h} \prec\phi$, and then $\psi$ is a Pfister
neighbor of $\qpf{h,g}$.

So we may assume that $\phi$ stays anisotropic over any quadratic
subextension in $E$.  The proof of Theorem \ref{main} then shows
that there exists a $3$-dimensional form $\psi\prec\phi$ such that
either $\psi$ is a Pfister neighbor of 
$\qpf{f_C(e),\frac{d}{e^2}}$ for some $e\in F^*$ with $f_C(e)\neq 0$,
or $\psi$ is totally singular. But the proof also showed that 
if $\psi$ is totally singular then this
can only hold in the case 2b.  In that situation and with 
$E=F(\alpha)$ as in the proof and $L=F(\beta)$ with $\beta=\alpha^2$,
it was shown that $\phi_L\cong\pff{\beta}\otimes \tau$ for some
nonsingular form $\tau$ over $L$.  In particular, $\dim\phi\equiv 0\bmod 4$.
By assumption, this requires $\dim\phi =4$, but this is impossible
as $\phi$ is nonsingular and can therefore not dominate a 
$3$-dimensional totally singular form.

Now if $\phi\in W_q(E/F)$ is a $2$-fold quadratic Pfister form, then
by the above there exists a $2$-fold quadratic Pfister form  $\pi$
of type (a'), (b) or (c) and a $3$-dimensional Pfister neighbor
$\psi$ of $\pi$ with $\psi\prec\phi$.  After scaling $\phi$
(and thus also $\psi$), 
we may assume $\psi\prec\pi$, hence
$\dim(\phi\perp\pi)_{\ani}\leq 2$.  But 
$\Delta (\phi\perp\pi)=0\in F/\wp(F)$,
thus $\dim(\phi\perp\pi)_{\ani}=0$, therefore $\phi\cong\pi$.
\end{proof}

A $6$-dimensional nonsingular quadratic form with trivial
Arf invariant is called an Albert form.  We will need them in 
the determination of the $2$-torsion part of the
Brauer kernel $\Br_2(E/F)$ in section \ref{brauer}

\begin{coro}\label{albert}
Let $\phi\in W_q(E/F)$ be anisotropic with $\dim\phi=6$ and 
$\Delta(\phi)=0\in F/\wp(F)$.
Then there exist $2$-fold quadratic Pfister forms $\pi_1$, $\pi_2$
as in Corollary \ref{2-fold} (a'), (b), (c) and $\lambda\in F^*$ such that
$\phi = \lambda (\pi_1+\pi_2)\in W_qF$.
\end{coro}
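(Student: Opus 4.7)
The plan is to apply Corollary \ref{2-fold} to extract a first Pfister form $\pi_1$ from $\phi$, to compute the $4$-dimensional Witt remainder, to recognize it as a scalar multiple of a second Pfister form of the required type, and finally to use roundness to consolidate the two separate scalars into a single $\lambda$.

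First, since $\dim\phi=6$ satisfies $\dim\phi>4$ and $\dim\phi\equiv 2\bmod 4$, Corollary \ref{2-fold} applies to $\phi$ and produces a $2$-fold quadratic Pfister form $\pi_1\in W_q(E/F)$ of type (a'), (b), or (c), together with a $3$-dimensional Pfister neighbor $\psi\prec\phi$ of $\pi_1$. Inspecting the proofs of Theorem \ref{main} and Corollary \ref{2-fold}, and rescaling $\phi$ by some $\mu\in F^*$ if necessary (the factor $\mu$ will be absorbed into the final $\lambda$), I may arrange
$$\pi_1\cong [1,Z_1]\perp [Z_2,Z_1/Z_2]=\qpf{Z_2,Z_1},\qquad\psi\cong [1,Z_1]\perp\qf{Z_2}\prec\pi_1,$$
where $(Z_2,Z_1)$ equals $(g,h)$, $(h,g)$, or $(f_C(e),d/e^2)$ according to the type. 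Applying Lemma \ref{subform}(iii) to the inclusion $[1,Z_1]\perp\qf{Z_2}=\psi\prec\phi$ then yields $D\in F$ and a nonsingular $2$-dimensional form $\tau$ over $F$ with
$$\phi\cong [1,Z_1]\perp [Z_2,D]\perp\tau.$$

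I will then compute $\phi+\pi_1$ in $W_qF$ using the cancellation $2[1,Z_1]\sim 0$ together with the identity $[a,b]+[a,c]=[a,b+c]$ in $W_qF$ (verified by an explicit substitution extracting a hyperbolic plane from the isotropic form $[a,b]\perp [a,c]$). This produces
$$\phi+\pi_1=\eta_0 \text{ in } W_qF, \qquad \eta_0:=[Z_2,D+Z_1/Z_2]\perp\tau,$$
a $4$-dimensional nonsingular form with trivial Arf invariant. The form $\eta_0$ must be anisotropic: otherwise its anisotropic part would have dimension at most $2$ with trivial Arf, hence be zero (since any $2$-dimensional nonsingular form with trivial Arf is isotropic, and therefore hyperbolic), forcing $\phi\cong\pi_1$ by uniqueness of anisotropic Witt representatives and contradicting $\dim\phi=6\neq 4=\dim\pi_1$.

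The crucial observation is that $Z_2\in D^*(\pi_1)\cap D^*(\eta_0)$, being the leading coefficient of both $[Z_2,Z_1/Z_2]\prec\pi_1$ and $[Z_2,D+Z_1/Z_2]\prec\eta_0$. I then set $\pi_2:=Z_2^{-1}\eta_0$, a $4$-dimensional nonsingular form with trivial Arf invariant representing $1$; any such form is a $2$-fold quadratic Pfister form, since writing it as $[1,g']\perp [c,d']$ and using triviality of Arf to identify $[1,g']\cong [1,cd']$ gives it the form $[1,cd']\perp c[1,cd']=\qpf{c,cd'}$. Since $\eta_0\in W_q(E/F)$ so does $\pi_2$, and then Corollary \ref{2-fold} classifies $\pi_2$ as being of type (a'), (b), or (c). Finally, the roundness of $\pi_1$ combined with $Z_2\in D^*(\pi_1)$ yields $Z_2\pi_1\cong\pi_1$, so
$$\phi=\pi_1+Z_2\pi_2=Z_2\pi_1+Z_2\pi_2=Z_2(\pi_1+\pi_2)\text{ in }W_qF,$$
which is the desired identity with $\lambda=Z_2$ (times any initial scaling factor). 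The main obstacle is the statement's demand for a single scalar $\lambda$ in front of both Pfister summands; this is resolved by choosing $\pi_2=Z_2^{-1}\eta_0$ so that its coefficient in the expression $\phi=\pi_1+Z_2\pi_2$ lies in $D^*(\pi_1)$, allowing roundness of $\pi_1$ to absorb the scalar $Z_2$ onto $\pi_1$ as well.
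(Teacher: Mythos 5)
Your proposal is correct and follows essentially the same route as the paper: extract $\pi_1$ via Corollary \ref{2-fold}, observe that the $4$-dimensional Witt remainder has trivial Arf invariant and is therefore similar to a $2$-fold quadratic Pfister form $\pi_2$ lying in $W_q(E/F)$, which Corollary \ref{2-fold} then classifies, and finally use roundness to merge the two scalars. The only (harmless) difference is that you locate the common represented value $Z_2\in D_F^*(\pi_1)\cap D_F^*(\eta_0)$ explicitly from your decomposition, whereas the paper deduces the existence of such a common value from the isotropy of $\pi_1\perp\lambda\pi_2$ forced by $\dim\phi=6<8$.
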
 

\begin{proof} After scaling and by Corollary \ref{2-fold}, there exists
such a $2$-fold quadratic Pfister form $\pi_1$ and a 
$3$-dimensional $\psi\prec\pi_1$ with
$\psi\prec\phi$.  Hence, there exists a nonsingular 
$\rho$ with $\dim\rho=4$ and  $\phi\perp\pi_1 =\rho\in W_qF$.
But then $\Delta(\rho)=0\in F/\wp(F)$, so there exists
$\lambda\in F$ such that $\rho\cong \lambda\pi_2$ for a $2$-fold quadratic
Pfister form $\pi_2$.  Since 
$0=(\phi\perp\pi_1)_E=(\lambda\pi_2)_E\in W_qE$, 
it follows that $\pi_2$ is as in Corollary \ref{2-fold} (a'), 
(b) or (c).  Also $\phi =\pi_1\perp\lambda\pi_2\in W_qF$.  
Comparing dimensions yields that $\pi_1\perp\lambda\pi_2$ is isotropic,
so $D_F(\pi_1)^*\cap D_F(\lambda\pi_2)^*\neq\emptyset$.
The roundness of $\pi_i$ then shows that one may assume without loss
of generality that $\lambda$ is
represented by $\pi_1$ and thus $\phi =\lambda(\pi_1\perp\pi_2)\in W_qF$.
\end{proof}

We now would like to align the result in Theorem \ref{main} with the
descriptions of generators in Theorem \ref{multiquad} for separable
and mixed biquadratic extensions (cases 1 and 2a) and 
in Theorem \ref{simpleinsep} for simple
purely inseparable degree $4$ extensions (case 3).
To do so, we use the following relations in the Witt group $W_q(F)$ that
are obvious or can be readily verified:
\begin{itemize} 
\item $[r,s]+[u,v]=[r+u,s]+[u,s+v]$.  In particular
$[1,u]+[1,v]=[1,u+v]$ and thus $\qpf{w,u}+\qpf{w,v}=\qpf{w,u+v}$;
\item $[1,u+v^2]=[1,u+v]$, in particular $[1,v+v^2]=0$;
\item $\qpf{u,u}=0$ and thus also $\qpf{u,u+v}=\qpf{u,v}$
and $\qpf{u+v,v}=\qpf{u+v,u}$;
\item $\qpf{uv,w}=\qpf{u,w}+u\qpf{v,w}$ and thus
$\qpf{uv,v}=\qpf{u,v}$;
\item $\qpf{uv^2,w}=\qpf{u,w}$, in particular $\qpf{v^2,w}=0$.
\end{itemize}

\subsection{Generators in the separable biquadratic case}
Let us assume that the degree $4$ extension $E/F$ is 
separable biquadratic, so $E=F(\mu,\nu)$ with
$\mu^2+\mu+u=0$ and $\nu^2+\nu+v=0$ for some $u,v\in F^*$.
By Theorem \ref{multiquad}(iii), $W_q(E/F)$ is generated (as 
$WF$-module) by $[1,u]$ and $[1,v]$.  In our list of
generators, there are forms $[1,w]$ with $F(\wp^{-1}(w))\subset E$.
But the only proper quadratic subextensions are 
$F(\mu)=F(\wp^{-1}(u))$, $F(\nu)=F(\wp^{-1}(v))$, and
$F(\mu+\nu)=F(\wp^{-1}(u+v))$, thus in the list of generators
in Theorem \ref{main}(a), the only anisotropic forms up to isometry
are $[1,u]$, $[1,v]$, $[1,u+v]=[1,u]+[1,v]$.

By Remark \ref{mainrem}(ii), there are no nonhyperbolic 
generators of type (b).  So the last type to consider are 
generators $\qpf{f_C(e),\frac{d}{e^2}}$ of type (c).  To do so,
let us first write $E$ as a simple extension.  Put
$\gamma:=\mu\nu +u+v$.  One checks that $E=F(\gamma)$ and that
the minimal polynomial of $\gamma$ is
$$f(X)=X^4+X^3+(u^2+v^2+uv)X+u^2v+uv^2+u^2v^2+u^4+v^4$$
with cubic resolvent
$$f_C(X)=X^3+(u^2+v^2+uv)X+u^2v+uv^2=(X+u)(X+v)(X+u+v)$$
The $X^0$ term of $f(X)$ is $d=u^2v+uv^2+u^2v^2+u^4+v^4$
and we get in $W_q(F)$:
$$\begin{array}{rcl}
\bigl[1,\frac{d}{e^2}\bigr] & = & \bigl[1,\frac{u^2v+v^2u}{e^2}+
\bigl(\frac{uv+u^2+v^2}{e}\bigr)^2\bigr]\\[1ex]
& = & \bigl[1,\frac{u^2v+v^2u}{e^2}+
\frac{uv+u^2+v^2}{e}\bigr]\\[1ex]
& = & \bigl[1,\frac{f_C(e)}{e^2}+e\bigr]
\end{array}$$
and hence, with $r=e+u$, $s=e+v$, $t=e+u+v$ and the above relations:
$$\begin{array}{rcl}
\qpf{f_C(e),\frac{d}{e^2}} & = & \qpf{\frac{f_C(e)}{e^2},\frac{f_C(e)}{e^2}+e}
= \qpf{\frac{f_C(e)}{e^2},e}\\[1ex]
 & = & \qpf{f_C(e),e}=\qpf{(e+u)(e+v)(e+u+v),e}\\[1ex]
 & = & \qpf{e+u,e}+(e+u)\qpf{e+v,e}+(e+u)(e+v)\qpf{e+u+v,e}\\[1ex]
 & = & \qpf{e+u,u}+r\qpf{e+v,v}+rs\qpf{e+u+v,u+v}\\[1ex]
 & = & \qf{1,r,rs,rst}_b\otimes[1,u]+\qf{r,rst}_b\otimes[1,v]
\end{array}$$
This shows that indeed $W_q(E/F)$ is already generated as
$WF$-module by $[1,u]$ and $[1,v]$, providing a new proof for 
Theorem \ref{multiquad}(iii).

\subsection{Generators in the mixed biquadratic case}
Let us assume that the degree $4$ extension $E/F$ is 
mixed biquadratic, so $E=F(\mu,\nu)$ with
$\mu^2=u$ and $\nu^2+\nu+v=0$ for some $u,v\in F^*$. In this case,
there are exactly two proper quadratic subextensions,
the separable quadratic extension $F(\wp^{-1}(v))$ and the
inseparable quadratic extension $F(\sqrt{u})$.  The only
nonhyperbolic forms of type (a) or type (b) in Theorem \ref{main}
are thus, up to isometry, $[1,v]\in WF\otimes [1,v]$ 
and $\qpf{u,w}\in \qf{1,u}_b\otimes W_q(F)$ for suitable $w\in F^*$.
So the last type to consider are 
generators $\qpf{f_C(e),\frac{d}{e^2}}$ of type (c).  We proceed
as before and write $E$ as a simple extension.  Put
$\gamma=\mu\nu$.  Then $E=F(\gamma)$ and the minimal polynomial of
$\gamma$ is $f(X)=X^4+uX^2+u^2v^2$ with cubic resolvent
$f_C(X)=X^3+uX^2$.  In $W_q(F)$, we then get with 
$d=u^2v^2$ and $r=\frac{uv}{e}$:
$$\begin{array}{rcl}
\qpf{f_C(e),\frac{d}{e^2}} & = & \qpf{e^3+e^2u,(\frac{uv}{e})^2}
= \qpf{e+u,\frac{uv}{e}}\\[1ex]
 & = & \qpf{\frac{uv}{e}(e+u),\frac{uv}{e}}=
\qpf{u(v+\frac{uv}{e}),\frac{uv}{e}}\\[1ex]
 & = & \qpf{u,\frac{uv}{e}}+u\qpf{v+\frac{uv}{e},\frac{uv}{e}}
=\qpf{u,r}+u\qpf{v+r,v}\\[1ex]
 & = & \qf{1,u}_b\otimes [1,r]+\qf{u,u(v+r)}_b\otimes [1,v]
\end{array}$$
and we recover Theorem \ref{multiquad}(ii) in the case
$n=1$ there, a result that is originally due to Ahmad \cite[Theorem 2.1]{a2}.

\subsection{Generators in the simple purely inseparable case}
Here, $E=F(\alpha)$ with $\alpha=\sqrt[4]{d}$ a root of $X^4+d$ where
$d\in F\setminus F^2$.  By Ahmad's result Theorem \ref{simpleinsep},
$W_q(E/F)$ is generated by $2$-fold quadratic Pfister forms of type
$\qpf{d,x}$ and $\qpf{x,dx^2y^2}$ with $x\in F^*$ and $y\in F^2(d)^*$. 
Forms of type $\qpf{d,x}$ are generators of type (b) in our list.
We now show how to express $\pi=\qpf{x,dx^2y^2}$ in terms of generators
of type (c) in our list.  

Let us write $y\in F^2(d)^*$ as $y=u^2+dv^2$ for some $u,v\in F$ with
$u\neq 0$ or $v\neq 0$.
Hence, in $W_q(F)$:
$$\qpf{x,dx^2y^2}=\qpf{x,dx^2(u^4+d^2v^4)}=
\qpf{x,dx^2u^4}+\qpf{x,d^3x^2v^4}\ .$$
If $u\neq 0$ put $s=\frac{1}{xu^2}$ and if $v\neq 0$ put
$t=\frac{1}{dxv^2}$.    If $u=0$ (and thus $v\neq 0$), we get
$$\begin{array}{rcl}
\pi & = & \qpf{x,dx^2u^4} = \qpf{x(\frac{1}{xu})^2,\frac{d}{s^2}}\\[1ex]
 & = & \qpf{s,\frac{d}{s^2}}= \qpf{s^3,\frac{d}{s^2}}\\[1ex]
 & = & \qpf{f_C(s),\frac{d}{s^2}}\ .
\end{array}$$
If $v=0$ (and thus $u\neq 0$), we get 
$$\begin{array}{rcl}
\pi & = & \qpf{x,d^3x^2v^4} = \qpf{x(d^3x^2v^4),\frac{d}{t^2}}\\[1ex]
 & = & \qpf{x^3d^3v^4(vt^3)^2,\frac{d}{t^2}}= \qpf{t,\frac{d}{t^2}}\\[1ex]
 & = & \qpf{f_C(t),\frac{d}{t^2}}\ .
\end{array}$$
Finally,  if both $u,v\neq 0$, we have
$\pi = \qpf{f_C(s),\frac{d}{s^2}}+\qpf{f_C(t),\frac{d}{t^2}}$.

\section{An application to Brauer kernels}\label{brauer}
Let $F$ be a field of characteristic $2$.  Recall that the 
central simple $F$-algebras of degree $2$ are exactly the
quaternion algebras $(a,b]$, $a,b\in F$, $b\neq 0$ generated by 
elements $e,f$ satisfying the relations $e^2=a$, $f^2+f=b$, $ef=(f+1)e$.
Such an algebra $(a,b]$ is a division algebra iff its norm form
$\qpf{a,b}$ is anisotropic, and $(a,b]\cong (a',b']$ iff
$\qpf{a,b}\cong\qpf{a',b'}$ (see, e.g., \cite[Prop.\ I.1.19]{b2}).

An Albert form $q$ is a $6$-dimensional nonsingular quadratic form
with $\Delta(q)=0\in F/\wp(F)$.  In particular, there exist
$\lambda,x,y\in F^*$, $u,v\in F$ such that
$\lambda q\cong [1,u+v]\perp x[1,u]\perp y[1,v]$.  In $W_qF$, we have
$\lambda q =\qpf{x,u}+\qpf{y,v}$, and the Clifford algebra $C(q)$  of
$q$ is Brauer equivalent to the biquaternion algebra $A=(x,u]\otimes (y,v]$
and it only depends on the similarity class of $q$.  Conversely,
given such a biquaternion algebra $A$, any nonsingular form $q$ of
dimension $6$ with trivial Arf-invariant that satisfies $C(q)=A\in \Br(F)$
will be called an Albert form for $A$.  Note that the index $\ind(A)$
will be $1$, $2$ or $4$.
The following well known
theorem is due to Jacobson \cite{j} (see also \cite{ms}).

\begin{theo}\label{jacobson}  
Let $q$ and $q'$ be Albert forms for the biquaternion
algebras $A$ and $A'$, respectively.  
\begin{enumerate}
\item[{\rm (i)}] $q$ is similar to $q'$ iff
$A\cong A'$.  
\item[{\rm (ii)}] $q$ is anisotropic iff $A$ is a division algebra, i.e.
$\ind (A)=4$.
\item[{\rm (iii)}] $i_W(q)=1$ iff $A=Q\in \Br(F)$ for a quaternion division algebra
$Q$, i.e. $\ind (A) =2$.
\item[{\rm (iv)}] $i_W(q)=3$ iff $A$ is split, i.e. $\ind(A)=1$.
\end{enumerate}  
\end{theo}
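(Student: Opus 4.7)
The plan is to derive all four assertions from the explicit $W_qF$-description $\lambda q=\qpf{x,u}+\qpf{y,v}$ coupled with $A\sim (x,u]\otimes (y,v]$ in $\Br(F)$.  A key preliminary is the trichotomy $i_W(q)\in\{0,1,3\}$: the anisotropic part of a $6$-dimensional nonsingular $q$ with $\Delta(q)=0$ is again nonsingular of even dimension at most $6$ with trivial Arf invariant, but a $2$-dimensional such form is hyperbolic, ruling out dimension~$2$.

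I would first treat (iv).  In characteristic $2$ every quaternion algebra is isomorphic to its opposite (both have the same norm form), so $A\sim 1$ is equivalent to $(x,u]\cong(y,v]$, which by the criterion recalled just before the theorem is equivalent to $\qpf{x,u}\cong\qpf{y,v}$, i.e.\ $\lambda q=0$ in $W_qF$, i.e.\ $q$ hyperbolic.  Next I would prove the forward direction of (iii): if $i_W(q)=1$ then $q_{\ani}$ is $4$-dimensional nonsingular with trivial Arf invariant, hence $q_{\ani}\cong \mu\rho$ for some $\mu\in F^*$ and an anisotropic $2$-fold quadratic Pfister $\rho$; since $\rho$ is the norm form of a quaternion division algebra $Q$, the Clifford invariant yields $A\sim Q$ and $\ind(A)=2$.

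For the reverse of (iii), I would assume $\ind(A)=2$ and let $\rho=\qpf{a,b}$ be the norm form of the quaternion division algebra $Q\sim A$.  Then $q\perp\rho$ has trivial Arf invariant, and because $C(q)+C(\rho)=[Q]+[Q]=0$ in $\Br(F)$ it also has trivial Clifford invariant, hence $q\perp\rho\in I_q^3F$ by Kato's theorem (the $n=2$ characteristic-$2$ Milnor conjecture).  Passing to $F(\rho)$ makes $\rho$ hyperbolic (Pfister) and, by (iv) applied over $F(\rho)$, also forces $q_{F(\rho)}$ hyperbolic, so $q\perp\rho\in W_q(F(\rho)/F)=\rho\cdot WF$ (the function-field Witt kernel for an anisotropic quadratic Pfister form in characteristic $2$).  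In particular the Witt-class dimension of $q\perp\rho$ is divisible by $4$; combined with the Arason--Pfister Hauptsatz ($\ge 8$) and total dimension $10$, the anisotropic part has dimension $0$ or $8$.  In either case $q\sim\lambda\rho$ in $W_qF$ for some $\lambda\in F^*$, so $q_{\ani}\cong\lambda\rho$ and $i_W(q)=1$.  Then (ii) follows immediately by elimination using (iv), (iii), and the trichotomy.

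For (i), the direction $q\sim q'\Rightarrow A\cong A'$ is immediate from the scale-invariance of the Clifford invariant.  Conversely, if $\ind(A)\in\{1,2\}$ then the uniqueness of $q_{\ani}$ (either $0$ or a scalar multiple of the common norm $\rho$ of the underlying division quaternion) immediately yields $q'\cong\nu q$ for a suitable $\nu\in F^*$.  The main obstacle is the case $\ind(A)=4$, where both $q$ and $q'$ are anisotropic $6$-dimensional forms with trivial Arf and $C(q)=C(q')=A$.  My strategy here is a subform-theorem argument: over $F(q)$ the form $q$ becomes isotropic so $A_{F(q)}$ has index $\le 2$, whence by (iii) or (iv) applied over $F(q)$ the form $q'_{F(q)}$ is also isotropic.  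A descent in the spirit of Proposition~\ref{sivatski} and Corollary~\ref{2-fold}, combined with the Arason--Pfister Hauptsatz applied to $\nu q\perp q'\in I_q^3F$ for an appropriately chosen $\nu\in F^*$ (such as a product of represented values of $q$ and $q'$), then forces the anisotropic part of $\nu q\perp q'$ down to dimension $0$, yielding $\nu q\cong q'$ and hence $q\sim q'$.
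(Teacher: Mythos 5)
The paper itself offers no proof of this statement: Theorem \ref{jacobson} is quoted as a classical result with \cite{j} and \cite{ms} as references, so there is no in-paper argument to measure yours against; I can only judge the proposal on its own terms. The trichotomy $i_W(q)\in\{0,1,3\}$ and parts (ii), (iii), (iv) are essentially sound, with one patchable imprecision in the reverse direction of (iii): after concluding that $(q\perp\rho)_{\ani}$ has dimension $0$ or $8$, the assertion ``in either case $q\sim\lambda\rho$'' is not justified when the dimension is $8$, since there you only obtain $q\sim\rho\perp c\rho\perp d\rho$ in $W_qF$ for some $c,d\in F^*$. The clean fix is to apply the function-field hyperbolicity criterion to $q$ itself rather than to $q\perp\rho$: since $A_{F(\rho)}$ splits, part (iv) over $F(\rho)$ gives $q_{F(\rho)}$ hyperbolic, hence $q_{\ani}\cong b\otimes\rho$ for a bilinear $b$, so $4\mid\dim q_{\ani}$; the value $0$ is excluded by (iv), forcing $\dim q_{\ani}=4$. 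This also renders the detour through Kato's theorem and the Hauptsatz unnecessary.

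The genuine gap is in part (i) in the case $\ind(A)=4$, which is the actual content of Jacobson's theorem. Choosing $\nu$ so that $\nu q\perp q'$ is isotropic and lies in $I_q^3F$ and then invoking the Arason--Pfister Hauptsatz only yields $\dim(\nu q\perp q')_{\ani}\in\{0,8,10\}$; even eliminating $10$ requires the ``no anisotropic $10$-dimensional forms in $I^3$'' gap theorem, not merely the Hauptsatz. More seriously, the case $\dim(\nu q\perp q')_{\ani}=8$, i.e.\ $\nu q=q'+\lambda\pi$ in $W_qF$ for an anisotropic $3$-fold Pfister form $\pi$, is compatible with everything you have established (it has trivial Arf and Clifford invariants), and nothing in your sketch rules it out; excluding it is precisely where every known proof does its real work (Jacobson's Jordan-norm computation, the Pfaffian/involution description of the Albert form, or the Mammone--Shapiro argument). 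Finally, Proposition \ref{sivatski} and Corollary \ref{2-fold} cannot supply the missing descent: they concern forms lying in the Witt kernel of the fixed quartic extension $E/F$, whereas Theorem \ref{jacobson} is a statement about Albert forms over an arbitrary field of characteristic $2$ that the paper uses as an input; invoking results downstream of it would in any case be methodologically backwards. As written, part (i) in the division case is asserted rather than proved.
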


\begin{theo} Let $E/F$ be a quartic extension.

{\rm (i)}  Let $Q$ be a quaternion algebra over $F$
Then $Q\in \Br_2(E/F)$ iff $Q$ is of one of the following types:
\begin{enumerate}
\item[{\rm (a)}] $(h,g]$ for $h\in F^*$ and $g\in F$ such that 
$F(\wp^{-1}(g))\subset E$;
\item[{\rm (b)}] $(g,h]$ for $h\in F$ and $g\in F^*$ such that 
$F(\sqrt{g})\subset E$;
\item[{\rm (c)}] $(f_C(e),\frac{d}{e^2}]$ for $e\in F^*$ with 
$f_C(e)\neq 0$.
\end{enumerate}

{\rm (ii)} If $D$ is a nontrivial division algebra with $D\in \Br_2(E/F)$, then
either $D\cong Q$ or $D\cong Q_1\otimes Q_2$ where 
$Q$, $Q_1$, $Q_2$ are quaternion algebras of type (a), (b), or (c).
In particular, $\Br_2(E/F)$ is generated by such quaternion algebras.
\end{theo}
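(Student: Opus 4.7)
The plan is to reduce both parts of the theorem to two earlier results: the classification of $2$-fold quadratic Pfister forms in $W_q(E/F)$ supplied by Corollary \ref{2-fold}, and the decomposition of Albert forms in $W_q(E/F)$ supplied by Corollary \ref{albert}. The bridge between algebras and forms is the one recalled at the start of Section \ref{brauer}: a quaternion algebra $(a,b]$ corresponds, up to isomorphism, to its norm form $\qpf{a,b}$, and a biquaternion algebra corresponds, up to Brauer equivalence, to its Albert form, with Theorem \ref{jacobson} providing the precise dictionary.

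For part (i), I would observe that a quaternion algebra $Q$ is split over $E$ if and only if its norm form becomes hyperbolic over $E$, i.e.\ lies in $W_q(E/F)$. Since the norm form is a $2$-fold quadratic Pfister form, Corollary \ref{2-fold} gives the complete list of possible norm forms, namely those of types (a'), (b), (c) there, and translating back via the isomorphism-isometry correspondence yields the three types (a), (b), (c) of the statement.

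For part (ii), let $D$ be a nontrivial division algebra with $D \in \Br_2(E/F)$. The key structural observation is that since $E$ splits $D$, the index $\ind(D)$ divides $[E:F] = 4$, so $\deg(D) = \ind(D) \in \{2,4\}$. If $\deg(D) = 2$, then $D$ is a quaternion algebra and part (i) applies. If $\deg(D) = 4$, then $D$ has exponent $2$, and by Albert's classical structure theorem $D$ is a biquaternion algebra. Let $q$ be an Albert form for $D$; by Theorem \ref{jacobson}(ii),(iv), $q$ is anisotropic and $q_E$ is hyperbolic, so $q \in W_q(E/F)$ and $q$ satisfies the hypotheses of Corollary \ref{albert}. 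That corollary provides $\lambda \in F^*$ and $2$-fold Pfister forms $\pi_1,\pi_2 \in W_q(E/F)$ of type (a'), (b), or (c) with $q = \lambda(\pi_1+\pi_2) \in W_qF$. Writing $\pi_i = \qpf{a_i,b_i}$ and setting $Q_i := (a_i,b_i]$, each $Q_i$ is a quaternion algebra of type (a), (b), or (c), and the biquaternion algebra $Q_1 \otimes Q_2$ has Albert form similar to $\lambda\pi_1 \perp \lambda\pi_2$, hence to $q$. By Theorem \ref{jacobson}(i), $Q_1 \otimes Q_2$ and $D$ are Brauer equivalent, and since both are central simple of degree $4$ they are isomorphic. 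The closing assertion that $\Br_2(E/F)$ is generated by quaternion algebras of the listed types is then immediate from (i) and (ii).

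The one mild subtlety I expect is the passage from Brauer equivalence to an honest isomorphism $D \cong Q_1 \otimes Q_2$; this is handled by the uniqueness of the degree-$4$ central simple algebra in each Brauer class. Everything else is a direct appeal to Corollaries \ref{2-fold} and \ref{albert}, to Albert's theorem on biquaternion decomposition of degree-$4$ algebras of exponent $2$, and to Jacobson's dictionary between $2$-fold Pfister forms (resp.\ Albert forms) and quaternion (resp.\ biquaternion) algebras.
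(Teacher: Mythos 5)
Your proposal is correct and follows essentially the same route as the paper: part (i) via the norm-form correspondence and Corollary \ref{2-fold}, and part (ii) via the index dividing $[E:F]=4$, Albert's biquaternion theorem, Corollary \ref{albert}, and Jacobson's dictionary (Theorem \ref{jacobson}). The only cosmetic difference is that the paper concludes $D\cong Q_1\otimes Q_2$ directly from Theorem \ref{jacobson}(i) once $\phi$ is seen to be an Albert form for both algebras, whereas you pass through Brauer equivalence first; both are fine.
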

\begin{proof}
(i) Let $Q=(x,y]$.  By the above remarks, $Q\in\Br(E/F)$ iff 
$\qpf{x,y}\in W_q(E/F)$.  The result then follows readily from
Corollary \ref{2-fold}.

(ii) Let $D$ be a nontrivial division algebra with $D\in \Br_2(E/F)$.  Since
$[E:F]=4$, one necessarily has $\ind(D)=2$ or $\ind(D)=4$.
If $\ind(D)=2$, then $D$ is a quaternion algebra and the result follows
from (i).  If $\ind(D)=4$, then $D$ is a biquaternion algebra
by Albert's theorem \cite[p.\ 174]{alb}.  Let $\phi$ be an Albert form
associated with $D$. By Theorem \ref{jacobson}, $\phi$ is anisotropic
and $\phi\in W_q(E/F)$. 
By Corollary \ref{albert}, there exist 
quaternion algebras $Q_1$, $Q_2$ as in (i) with norm forms
$\pi_1$ and $\pi_2$, respectively, and $\lambda\in F^*$ such that
$\phi =\lambda(\pi_1+\pi_2)\in W_qF$.  But then $\phi$ is also
an Albert form for $Q_1\otimes Q_2$, and again by Theorem \ref{jacobson},
 we have $D\cong Q_1\otimes Q_2$.
\end{proof}


\begin{thebibliography}{00}

\bibitem{a1}
H.\ Ahmad,
On quadratic forms over inseparable quadratic extensions,
Arch.\ Math.\ 63 (1994) 23--29. 

\bibitem{a2}
H.\ Ahmad,
Witt kernels of bi-quadratic extensions in characteristic 2, 
Bull.\ Austral.\ Math.\ Soc. 69 (2004) 433--440.


\bibitem{a3}
H.\ Ahmad,
The Witt kernels of purely inseparable quartic extensions,
Linear Algebra Appl.\ 395 (2005) 265--273. 

\bibitem{alb}
A.A.\ Albert,
Structure of Algebras,
Amer.\ Math.\ Soc.\  Colloq.\ Publ., vol. 24, 
Amer.\ Math.\ Soc., New York, 1939.



\bibitem{al} R.\ Aravire, A.\ Laghribi,
Results on Witt kernels of quadratic forms for
multi-quadratic extensions,
Proc.\ Amer.\ Math.\ Soc.\  141  (2013),  no.\ 12, 4191--4197. 

\bibitem{b1}
R.\ Baeza,
Ein Teilformensatz f\"ur quadratische Formen in Charakteristik $2$,
Math.\ Z.\ 135 (1974) 175--184.


\bibitem{b2}
R.\ Baeza,
Quadratic forms over semilocal rings, 
Lecture Notes in Math., 655, Springer, Berlin, 1978.

\bibitem{dh}
A.~Dolphin, D.W.~Hoffmann, 
Differential forms and bilinear forms under field extensions.
Linear Algebraic Groups and Related Structures
Preprint no. 426 (2011), www.math.uni-bielefeld.de/LAG/man/426.pdf
 

\bibitem{ekm} R.\ Elman, N.\ Karpenko, A.\ Merkurjev,
The Algebraic Theory of Quadratic Forms,  Colloquium\ Publ. vol.\ 56,
American Mathematical Society, Providence, Rhode Island, 2008.

\bibitem{eltw} R.\ Elman, T.Y.\ Lam, J.-P.\ Tignol, A.\ Wadsworth,
Witt rings and Brauer groups under multiquadratic extensions I, 
Amer.\ J.\ Math.\ 105 (1983)  1119--1170.

\bibitem{elw1} R.\ Elman, T.Y.\ Lam, A.\ Wadsworth,
Amenable fields and Pfister extensions
(in: Conference on Quadratic Forms, Queen's Univ., Kingston, Ont., 1976), 
Queen's Papers in Pure and Appl.\ Math., No.\ 46 (1977) 445–492.

\bibitem{elw2} R.\ Elman, T.Y.\ Lam, A.\ Wadsworth,
Function fields of Pfister forms, Invent.\ Math.\ 51 (1979) 61–75.


\bibitem{h1}
D.W.\ Hoffmann,
Witt kernels of bilinear forms for algebraic extensions 
in characteristic $2$, 
Proc.\ Amer.\ Math.\ Soc.\ 134 (2006)  645--652.

\bibitem{h2}
D.W.\ Hoffmann,
Witt kernels of quadratic forms for multiquadratic extensions in
characteristic $2$.  Preprint arXiv:1403.1802 (2014).

\bibitem{hl1} D.W.\ Hoffmann, A.\ Laghribi,
Quadratic forms and Pfister neighbors in characteristic 2,  
Trans.\ Amer.\ Math.\ Soc.\  356  (2004),  no.\ 10, 4019--4053.


\bibitem{hl2} D.W.\ Hoffmann, A.\ Laghribi,
Isotropy of quadratic forms over the function field of a quadric 
in characteristic 2,
J.\ Algebra  295  (2006),  no.\ 2, 362--386.

\bibitem{j} N.\ Jacobson,
Some applications of Jordan norms to involutorial simple 
associative algebras,
Adv.\ in Math.\ 48 (1983), no.\ 2, 149--165.



\bibitem{kn1}
M.\ Knebusch,
Grothendieck- und Wittringe von nichtausgearteten symmetrischen 
Bilinearformen, 
S.-B. Heidelberger Akad.\ Wiss.\ Math.-Natur.\ Kl.\ (1969/70) 93--157.



\bibitem{llt}
T.Y.\ Lam, D.\ Leep, J.-P.\ Tignol,
Biquaternion algebras and quartic extensions,
IHES Publ.\ Math.\ 77 (1993) 63--102. 


\bibitem{mm}
P.\ Mammone, R.\ Moresi,
Formes quadratiques, alg\`ebres \`a division et 
extensions multiquadratiques ins\'eparables, 
Bull.\ Belg.\ Math.\ Soc.\ Simon Stevin 2 (1995)  311--319.

\bibitem{ms}
P.\ Mammone, D.\ Shapiro,
The Albert quadratic form for an algebra of degree four,
Proc.\ Amer.\ Math.\ Soc.\ 105 (1989), no.\ 3, 525--530. 



\bibitem{si}
A.S.\ Sivatski, 
The Witt ring kernel for a fourth degree field extension and related problems,
J.\ Pure Appl.\ Algebra 214 (2010) 61--70. 

\bibitem{sp}
T.A.\ Springer,
 Sur les formes quadratiques d'indice z\'ero,
C.\ R.\ Acad.\ Sci.\ Paris 234 (1952) 1517--1519.

\end{thebibliography}
\end{document}